\LetLtxMacro\todonotestodo\todo
\renewcommand{\todo}[2][]{\todonotestodo[#1]{TODO: {#2}}}
\theoremstyle{definition}
\newtheorem{theorem}{Theorem}
\newtheorem{lemma}{Lemma}[section]
\newtheorem{proposition}[lemma]{Proposition}
\newtheorem{corollary}[lemma]{Corollary}
\newtheorem{remark}[lemma]{Remark}
\newtheorem{example}[lemma]{Example}
\newtheorem{definition}[lemma]{Definition}
\newtheorem*{claim*}{Claim}
\newtheorem*{theorem*}{Theorem}
\newtheorem*{corollary*}{Corollary}
\newtheorem*{lemma*}{Lemma}
\newtheorem*{remark*}{Remark}
\newtheorem*{question*}{Question}
\newcommand{\Z}{\mathbb{Z}}
\newcommand{\N}{\mathbb{N}}
\newcommand{\bdot}{\boldsymbol{\cdot}}
\title[On the arithmetic of ultraproducts]{On the arithmetic of ultraproducts of commutative cancellative monoids}
\thanks{\textit{Mathematics subject classification.} primary: 13F15, 03C20; secondary: 13L05, 03C60, 13A15}
\thanks{\textit{Key words. ultraproducts, factorization, monoids, Krull rings, non-standard methods}}
\author{Daniel Windisch}
\thanks{The author is supported by the Austrian Science Fund (FWF), project I~4406.}
\begin{document}

\maketitle

\begin{abstract}
We develop first steps in the study of factorizations of elements in ultraproducts of commutative cancellative monoids into irreducible elements. A complete characterization of the (multi-)sets of lengths in such objects is given. As applications, we show that several important properties from factorization theory cannot be expressed as first-order statements in the language of monoids, and we construct integral domains that realize every multiset of integers larger $1$ as a multiset of lengths. Finally, we give a new proof (based on our ultraproduct techniques) of a theorem by Geroldinger, Schmid and Zhong from additive combinatorics and we propose a general method for applying ultraproducts in the setting of non-unique factorizations.
\end{abstract}

\section{Introduction}

Having its origins primarily in algebraic number theory, see for instance~\cite{Geroldinger-Crelle} and the very recent work of Rago~\cite{Balint} into this direction, the theory of non-unique factorizations has become a standard topic in ring theory. Since the publication of the fundamental monograph by Geroldinger and Halter-Koch~\cite{GHK}, new and fruitful approaches in this area have been launched.

Several authors studied the multiplicative arithmetic of objects more geometric in flavour. Kainrath~\cite{KainrathDistribution} investigated prime divisors in the class groups of integral algebras over a field. This has been carried on by Fadinger-Held and the present author for affine toric varieties and their non-Noetherian analogues~\cite{KrullPrimeDiv,affine} and was then applied to the arithmetic of their affine coordinate rings in a joint paper with G.W. Chang~\cite[Section 5]{weaklyKrull2}. Lately, a first step to an understanding of non-unique factorizations in local complete intersection rings was taken~\cite{classgroup-complete-int}.

Smertnig opened the non-commutative world for the study of non-unique factorizations, see his survey~\cite{Smertnig-survey} and the recent article~\cite{Smertnig-BF}. 

A first \textit{unifying theory of factorizations} was proposed by Tringali~\cite{Salvo1} and further developed by Cossu and Tringali~\cite{Laura-Salvo}. It subsumes a wide variety of factorization phenomena all across mathematics by suggesting a common language, and it has been fruitfully applied to the classical problem of characterizing atomicity in monoids~\cite{Salvo2}.

Although we will stay within the realm of commutative cancellative monoids which is classical for factorization theory, our considerations on non-unique factorizations of elements of ultraproducts as products of irreducible elements are non-standard in two ways:
\begin{enumerate}
\item As Schoutens points out in the introduction of his inspiring book on the use of ultraproducts in commutative algebra~\cite{Schoutens}, these objects have been ``shunned perhaps because they were conceived as non-algebraic, belonging to the alien universe of set-theory and non-standard arithmetic, a universe in which most mathematicians did not, and still do not feel too comfortable''. 
\item Even when the monoids in the components of an ultraproduct have certain finiteness conditions, such as atomic or BF, typically assumed in factorization theory, the ultraproduct itself fails to satisfy them in all non-trivial situations.
\end{enumerate}

It is, however, evident that the ultraproduct construction has been very successfully applied to important problems in algebra such as the proof of Artin's Conjecture by Ax and Kochen~\cite{Ax-Kochen} and a recent non-standard approach to the Almost Purity Theorem and related results due to Jahnke and Kartas~\cite{Franzi}.

In view of factorization theory, these objects are useful because structural results on lengths of factorizations come in very naturally based on some knowledge of the component monoids, see Section~\ref{section:lengths}. Moreover, they connect objections that are model-theoretic in nature to classical factorizations and lead to classes of unexpected examples of monoids as we point out in Section~\ref{section:examples}. A new and unconventional view-point is taken in Section~\ref{section:infinite}, where we distinguish infinite cardinals as multiplicities of lengths of factorizations in ultraproducts and generalize the results of Sections~\ref{section:lengths} and~\ref{section:examples}, possibly with some additional set-theoretical assumptions such as the generalized continuum hypothesis or the existence of measurable cardinals.

The last two sections are attributed to applications to the classical theory of non-unique factorizations. We give a new proof, based on ultraproducts, for a theorem~\cite[Theorem 3.7]{application} of Geroldinger, Schmid and Zhong from 2017 that is related to additive combinatorics. These ideas originated in a personal discussion with Victor Fadinger-Held. 

A natural ingredient in this, though hidden at first sight, is the protoproduct construction as introduced by Schoutens~\cite[Chapter 12]{Schoutens} and successfully employed in algebraic topology~\cite{algtop,algtop1}. Based on it, we propose a general method of using ultraproducts for studying monoid arithmetic and give several possible fields of application. Finally, we prove that a protoproduct of Krull monoids is again a Krull monoid and we explicitely determine a divisor theory under customary assumptions.

Before we start our considerations, we want to recall some notions from factorization theory and model theory for the convenience of the reader.

\section{Preliminaries}

Throughout this paper, $\Lambda$ will always denote a (usually infinite) set. We make the following standing assumption:
\begin{center}
All algebraic structures are commutative.
\end{center}

\noindent
\textbf{Monoids.} We write semigroups multiplicatively. A semigroup $H$ is called \textit{cancellative} if $ac = bc$ implies $a = b$ for all $a,b,c \in H$.
A \textit{monoid} is a semigroup with identity element $1$. By $H^\times$ we denote the set of units, that is, invertible elements of the monoid $H$. If $D$ is an integral domain then the set $D^\bullet$ of all regular elements (all elements that do not divide $0$) of $D$ endowed with the ring multiplication is a cancellative monoid.

In the following we will define several notions for monoids. These definitions are transferred to a domain $D$ by considering them for the monoid $D^\bullet$ of regular elements.\\

\noindent
\textbf{Atoms and associates.}
A non-unit $a$ of a cancellative monoid $H$ is called an \textit{atom} or \textit{irreducible}, if $a = bc$ implies $b \in H^\times$ or $c \in H^\times$ for all $b,c \in H$. We denote by $\mathcal{A}(H)$ the set of all atoms of $H$. 

Elements $a,b \in H$ are called \textit{associated} if there exists $\varepsilon \in H^\times$ such that $a = \varepsilon b$. If $a$ and $b$ are associated, we write $a \sim b$.\\

\noindent
\textbf{Factorizations.} For a formal mathematical definition of factorizations and related concepts and results, see the monograph by Geroldinger and Halter-Koch on non-unique factorizations~\cite{GHK}. We use a description that is more informal but enough for the purpose of this paper. Let $H$ be a cancellative monoid and $a \in H$. A \textit{factorization} of $a$ is an expression of the form
\[
a = u_1 \cdots u_\ell,
\]
where $\ell \in \mathbb{N}_0$ and $u_1,\ldots,u_\ell \in \mathcal{A}(H)$. The integer $\ell$ is called the \textit{length} of the factorization.

The monoid $H$ is called \textit{atomic} if every non-unit of $H$ admits a factorization. 

Two factorizations $u_1 \cdots u_n$ and $v_1 \cdots v_m$ are called \textit{essentially the same} if $m = n$ and there exists a permutation $\sigma$ of the indices such that $u_i \sim v_{\sigma(i)}$ for all $i \in \{1,\ldots ,n\}$. Two factorizations that are not essentially the same are called \textit{essentially different}. Being essentially the same is clearly an equivalence relation on the set of all factorizations of an element in a cancellative monoid. 

We say that $H$ is \textit{factorial} if $H$ is atomic and every two factorizations of an element of $H$ are essentially the same.\\
%

\noindent
\textbf{Multisets.}  A \textit{multiset} $L$ is a pair $(M,\#_L)$, where $M$ is a set and $\#_L$ is a map defined on $M$ such that $\#_L(m)$ is a cardinal number for every $m \in M$. $L$ is called \textit{infinite} if $\{m \in M \mid \#_L(m) \neq 0 \}$ is infinite and it is called \textit{finite} otherwise. For $m \in M$, the cardinal $\#_{L}(m)$ is called the \textit{multiplicity} of $m$ in $L$. \\
If $L = (M,\#_L)$ is a multiset then $m \in M$ is called an \textit{element} of $L$ if $\#_L(m) \geq 1$. If $K=(M,\#_K)$ is another multiset, we write $L \subseteq K$ if every element of $L$ is an element of $K$, and $L = K$ if $\#_L(m) = \#_K(m)$ for every $m \in M$. \\

\noindent
\textbf{Sets of lengths.} The \textit{multiset of lengths} $L_H(h)= L(h)$ of an element $h$ in a monoid $H$ is the multiset $L(h) \subseteq \mathbb{N}_{0}$ such that $\#_{L(h)}(\ell)$  is the cardinality of the set of equivalence classes of factorizations with length $\ell$ of $h$ with respect to being essentially the same, for every $\ell \in \mathbb{N}_0$. The \textit{set of lengths} of $h$ is the underlying set of $L(h)$ which, by abuse of notation, we also denote by $L(h)$. We call 
\[
\mathcal{L}(H) = \{L(h) \mid h \in H\}
\]
the \textit{system of sets of lenghts} of $H$. We will use the notation $\mathcal{L}(H)$ only in Section~\ref{section:application} and, there, $L(h)$ will always denote a set.\\

\noindent
\textbf{Filters and ultrafilters.} A \textit{filter} on $\Lambda$ is a non-empty collection $\mathcal{U}$ of subsets of $\Lambda$ satisfying the following properties:
\begin{itemize}
\item[•] $\emptyset \notin \mathcal{U}$.
\item[•] If $M \subseteq \Lambda$ and $U \in \mathcal{U}$ with $U \subseteq M$, then $M \in \mathcal{U}$.
\item[•] If $U, V \in \mathcal{U}$, then $U \cap V \in \mathcal{U}$.
\end{itemize}
A filter $\mathcal{U}$ on $\Lambda$ is called an \textit{ultrafilter}, if $M \in \mathcal{U}$ or $\Lambda \setminus M \in \mathcal{U}$ for all $M \subseteq \Lambda$. \\
It is easy to see that, for an ultrafilter $\mathcal{U}$ on $\Lambda$ and $U,V \subseteq \Lambda$ with $U \cup V \in \mathcal{U}$, either $U \in \mathcal{U}$ or $V \in \mathcal{U}$. \\
An ultrafilter $\mathcal{U}$ on $\Lambda$ is called \textit{pricipal} or \textit{trivial} if for some $\lambda \in \Lambda$ it contains the singleton $\{\lambda \}$. Otherwise $\mathcal{U}$ is called \textit{non-principal} or \textit{non-trivial}. An ultrafilter $\mathcal{U}$ on $\Lambda$ is non-principal if and only if $\mathcal{U}$ contains no finite subset of $\Lambda$. It is a standard fact that every infinite set admits a non-trivial ultrafilter. \\

\noindent
\textbf{Ultraproducts.} All the concepts that we will recall now, can be defined in far greater generality, see, for instance, the standard textbook on model theory by Chang and Keisler~\cite{mod}. However, we only need the ultraproduct construction in the case of monoids. 

Let $\mathcal{U}$ be an ultrafilter on $\Lambda$. We define an equivalence relation $\equiv$ on the product $H = \prod_{\lambda \in \Lambda} H_\lambda$ of a given family of sets $H_\lambda$. For $r = (r_\lambda), s = (s_\lambda) \in H$ let
\begin{align*}
r \equiv s :\Leftrightarrow \{\lambda \in \Lambda \mid r_\lambda = s_\lambda \} \in \mathcal{U}.
\end{align*}
The \textit{ultraproduct} $H^* = \prod_{\lambda \in \Lambda}^\mathcal{U} H_\lambda$ of the $H_\lambda$ with respect to $\mathcal{U}$ is the set $H/_\equiv$. For an element $a = (a_\lambda) \in H$, the corresponding equivalence class in $H^*$ is denoted by $a^* = (a_\lambda)^*_{\lambda \in \Lambda} = (a_\lambda)^*$. Sometimes, if there is an additional index, we write $a^{(\lambda)}$ instead of $a_\lambda$ for the components. When all the $H_\lambda$ are monoids, the set $H^*$ endowed with the operation defined by $a^* \cdot b^* = (a_\lambda \cdot b_\lambda)^*$ is also a monoid.\\

\noindent
\textbf{First-order sentences.} An important property of ultraproducts is that they preserve first-order statements.  Roughly speaking, a first-order sentence in the language of monoids is a formula (without free variables) only using $=$, $\cdot$, $1$ and logical symbols such as quantifiers and sentential connectives, in such a way that quantification is only done over variables for the elements of the monoid.

We will make use of the following fundamental theorem for ultraproducts~\cite[Theorem 4.1.9]{mod}:

\begin{theorem*}[\L o\'s]\label{Los}
A first order sentence $\varphi$ is satisfied by $H^*$ if and only if the set of all $\lambda \in \Lambda$ such that $H_\lambda$ satisfies $\varphi$ is in $\mathcal{U}$.
\end{theorem*}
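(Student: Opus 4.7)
The plan is to prove a strengthened version of Łoś's theorem that allows formulas with free variables, by induction on the complexity of the formula, and then specialize to sentences at the end. Concretely, for any formula $\varphi(x_1,\ldots,x_n)$ in the language of monoids and any tuple $a_1^*,\ldots,a_n^* \in H^*$ with representatives $a_i^* = (a_i^{(\lambda)})^*$, I will show that
\[
H^* \models \varphi(a_1^*,\ldots,a_n^*) \iff \bigl\{\lambda \in \Lambda \mid H_\lambda \models \varphi(a_1^{(\lambda)},\ldots,a_n^{(\lambda)}) \bigr\} \in \mathcal{U}.
\]
The statement for sentences is then the special case $n=0$.

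For the base case I would handle atomic formulas, which in the language of monoids are equations $t_1(x_1,\ldots,x_n) = t_2(x_1,\ldots,x_n)$ between terms built from $\cdot$ and $1$. Evaluating a term at a tuple of ultraproduct elements produces another ultraproduct element whose $\lambda$-th component is the corresponding componentwise term evaluation, simply because the multiplication on $H^*$ was defined componentwise. The equivalence then reduces to the very definition of $\equiv$ via $\mathcal{U}$. The inductive step for conjunction is immediate from the closure of $\mathcal{U}$ under finite intersection (together with the superset property used to enlarge an intersection from the conjuncts' defining sets).

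The two substantive steps are negation and existential quantification. For negation, I would invoke the ultrafilter property: for any $M \subseteq \Lambda$, exactly one of $M$ and $\Lambda \setminus M$ belongs to $\mathcal{U}$, so the induction hypothesis applied to $\varphi$ combined with this dichotomy gives the claim for $\neg\varphi$. For $\exists x \,\varphi(x,\bar{a})$, the forward direction is trivial: a witness $b^* \in H^*$ lifts to componentwise witnesses on a set in $\mathcal{U}$ by the induction hypothesis applied to $\varphi$. The backward direction requires the axiom of choice: if $S = \{\lambda \mid H_\lambda \models \exists x\,\varphi(x, a^{(\lambda)})\} \in \mathcal{U}$, then for each $\lambda \in S$ we pick some witness $b_\lambda \in H_\lambda$ (and any value on $\Lambda \setminus S$); the class $b^* = (b_\lambda)^*$ is then a witness in $H^*$, since $\{\lambda \mid H_\lambda \models \varphi(b^{(\lambda)}, a^{(\lambda)})\} \supseteq S$ and $\mathcal{U}$ is upward closed. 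The universal quantifier is then handled via $\forall x\,\varphi \equiv \neg \exists x\, \neg\varphi$.

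The main obstacle is really pinpointing where the extra strength of an ultrafilter (over a mere filter) is used: only the negation step genuinely requires the dichotomy property, while the other steps are pure filter arguments. A secondary technical point to get right is the choice step in the existential case, which is what turns pointwise solvability on a set in $\mathcal{U}$ into a global solution in $H^*$; one must be careful to define the witness on \emph{all} of $\Lambda$, not just on $S$, so that it determines a bona fide element of the product. Once the induction is set up cleanly with free variables, every remaining case is a direct bookkeeping argument combining the induction hypothesis with the defining properties of $\mathcal{U}$.
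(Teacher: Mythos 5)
Your proof is correct, but note that the paper does not actually prove this statement: it is quoted as a known result from Chang and Keisler's textbook (Theorem 4.1.9 there), so there is no in-paper argument to compare against. What you have written is precisely the standard proof from that source --- strengthening to formulas with free variables, inducting on complexity, with the atomic case reducing to the definition of $\equiv$, negation using the ultrafilter dichotomy, and the existential case using choice to assemble a componentwise witness. The details are sound, including the two points you flag as delicate (where the ultrafilter property, as opposed to a mere filter, is genuinely needed, and the need to extend the witness to all of $\Lambda$). One small bookkeeping item you could make explicit is that the right-hand side $\{\lambda \mid H_\lambda \models \varphi(a_1^{(\lambda)},\ldots,a_n^{(\lambda)})\} \in \mathcal{U}$ is independent of the chosen representatives $a_i = (a_i^{(\lambda)})_\lambda$: if two representatives agree on a set in $\mathcal{U}$, the two truth sets have the same intersection with that agreement set, so they lie in $\mathcal{U}$ simultaneously. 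This is worth a sentence before the induction so that the inductive statement is well posed.
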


It is easy to see that being cancellative is a first order sentence in the language of monoids and therefore the ultraproduct $H^*$ of cancellative monoids $H_\lambda$ is again cancellative. As we will see in Section~\ref{section:examples}, many useful properties that are common in the theory of non-unique factorizations in monoids cannot be expressed as first-order statements in the language of monoids. 

We want to note that the very basic Lemmas~\ref{2.1} and~\ref{2.2} can also be deduced from a more general version of the Theorem of \L o\'s that deals not only with sentences but with general first-order formulas. However, since this paper is intended rather to reach algebraists interested in factorizations, we spell out the easy proofs that might be quite instructive when first working with ultraproducts.\\

\section{Sets of lengths in ultraproducts}\label{section:lengths}

Let $\mathcal{U}$ be an ultrafilter on the set $\Lambda$ and let $H^* = \prod_{\lambda \in \Lambda}^\mathcal{U} H_\lambda$ be the ultraproduct of the cancellative monoids $H_\lambda$ with respect to $\mathcal{U}$. If $\Lambda$ is finite, then $\mathcal{U}$ is a principal ultrafilter and therefore $H^* \cong H_\lambda$ for some $\lambda \in \Lambda$. To exclude this trivial case, let $\Lambda$ be infinite and let $\mathcal{U}$ be non-principal.

\begin{lemma}\label{2.1}
For $a \in H = \prod H_\lambda$, the corresponding element $a^*$ is a unit in $H^*$ if and only if $\{ \lambda \in \Lambda \mid a_\lambda \in H_\lambda^\times\} \in \mathcal{U}$. 
\end{lemma}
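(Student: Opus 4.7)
The plan is to prove both directions directly from the definition of the ultraproduct and the ultrafilter axioms; no appeal to \L o\'s' theorem is needed, which is presumably the point of spelling it out as a warm-up exercise.

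For the forward implication, I would assume $a^*$ is a unit in $H^*$ and pick $b = (b_\lambda) \in H$ with $a^* \cdot b^* = 1_{H^*} = (1_{H_\lambda})^*$. Unfolding the definition of multiplication and the equivalence relation $\equiv$, this means
\[
V := \{\lambda \in \Lambda \mid a_\lambda b_\lambda = 1_{H_\lambda}\} \in \mathcal{U}.
\]
On $V$, the element $a_\lambda$ has $b_\lambda$ as a (two-sided, by commutativity) inverse, so $V \subseteq \{\lambda \in \Lambda \mid a_\lambda \in H_\lambda^\times\}$. By the upward-closure axiom for filters, the larger set is in $\mathcal{U}$.

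For the converse, I would assume $U := \{\lambda \mid a_\lambda \in H_\lambda^\times\} \in \mathcal{U}$ and construct an inverse of $a^*$ componentwise using the axiom of choice: for each $\lambda \in U$, choose some $b_\lambda \in H_\lambda$ with $a_\lambda b_\lambda = 1_{H_\lambda}$, and for $\lambda \notin U$ set $b_\lambda := 1_{H_\lambda}$ (any choice works). Setting $b = (b_\lambda)$ we have $\{\lambda \mid a_\lambda b_\lambda = 1_{H_\lambda}\} \supseteq U \in \mathcal{U}$, so $a^* b^* = (1_{H_\lambda})^* = 1_{H^*}$ in $H^*$, showing $a^*$ is a unit.

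There is no real obstacle here; the only subtlety worth flagging is that the ultrafilter property (as opposed to just being a filter) is not needed for this lemma, so I would phrase the proof in terms of filter axioms only. This will make the argument transparent when it is reused later without having to revisit hypotheses on $\mathcal{U}$.
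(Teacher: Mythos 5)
Your proof is correct and follows essentially the same argument as the paper: extract an inverse witness and use upward closure for one direction, and build a componentwise inverse on $U$ (padding arbitrarily off $U$) for the other. Your side remark that only the filter axioms are used here is also accurate.
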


\begin{proof}
If $a^*$ is a unit in $H^*$, let $b \in H$ such that $a^*b^* = 1$. Then $\{ \lambda \in \Lambda \mid a_\lambda \in H_\lambda^\times\} \supseteq \{ \lambda \in \Lambda \mid a_\lambda b_\lambda = 1\} \in \mathcal{U}$. 

Now let $U = \{ \lambda \in \Lambda \mid a_\lambda \in H_\lambda^\times\} \in \mathcal{U}$. For $\lambda \in U$, let $b_\lambda = a_\lambda^{-1}$. Let $b_\lambda = 1$ (or any other arbitrary element of $H_\lambda$) if $\lambda \in \Lambda \setminus U$. Then $a^*b^* = 1$.
\end{proof}

\begin{lemma}\label{2.2}
Let $r \in H = \prod H_\lambda$. Then $r^* \in \mathcal{A}(H^*)$ if and only if $\{ \lambda \in \Lambda \mid r_\lambda \in \mathcal{A}(H_\lambda) \} \in \mathcal{U}$.
\end{lemma}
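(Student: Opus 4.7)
The plan is to prove both directions directly from the definition of an atom, using Lemma~\ref{2.1} to translate unit-statements in $H^*$ into ultrafilter-statements about components, and using the basic ultrafilter property that $U \cup V \in \mathcal{U}$ forces $U \in \mathcal{U}$ or $V \in \mathcal{U}$.

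For the backward direction, suppose $U := \{\lambda \in \Lambda \mid r_\lambda \in \mathcal{A}(H_\lambda)\} \in \mathcal{U}$. Since atoms are non-units, Lemma~\ref{2.1} immediately gives that $r^*$ is a non-unit. Now suppose $r^* = s^* t^*$ for some $s, t \in H$; then the set $V = \{\lambda \mid r_\lambda = s_\lambda t_\lambda\}$ lies in $\mathcal{U}$, hence so does $U \cap V$. On $U \cap V$ we have an atom factoring as $r_\lambda = s_\lambda t_\lambda$, so the set splits as $A \cup B$ where $A = \{\lambda \in U \cap V \mid s_\lambda \in H_\lambda^\times\}$ and $B = \{\lambda \in U \cap V \mid t_\lambda \in H_\lambda^\times\}$. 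By the remark about ultrafilters, either $A \in \mathcal{U}$ or $B \in \mathcal{U}$, and Lemma~\ref{2.1} translates this into $s^* \in (H^*)^\times$ or $t^* \in (H^*)^\times$.

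For the forward direction, I would proceed by contraposition: assume $U \notin \mathcal{U}$, so $W := \Lambda \setminus U \in \mathcal{U}$. Since $r^*$ is an atom (hence a non-unit), Lemma~\ref{2.1} tells us the set of $\lambda$ for which $r_\lambda$ is a unit is not in $\mathcal{U}$, so its complement $W'$ is; replacing $W$ by $W \cap W' \in \mathcal{U}$, we may assume that for every $\lambda \in W$ the element $r_\lambda$ is neither a unit nor an atom, so admits some decomposition $r_\lambda = s_\lambda t_\lambda$ with $s_\lambda, t_\lambda \notin H_\lambda^\times$. Extending $s,t$ arbitrarily outside $W$ and passing to $H^*$, we obtain $r^* = s^* t^*$, and Lemma~\ref{2.1} applied to each factor (whose ``unit-set'' meets $W$ in $\emptyset$ and so cannot be in $\mathcal{U}$) shows that neither $s^*$ nor $t^*$ is a unit, contradicting atomicity of $r^*$.

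The only point requiring a little care is the forward direction: one must combine the failure of $U \in \mathcal{U}$ with the non-unit-ness of $r^*$ \emph{before} choosing the decompositions, so that on a single set in $\mathcal{U}$ we have genuine non-trivial factorizations of each $r_\lambda$ available simultaneously. Once the $s_\lambda, t_\lambda$ have been chosen coherently on this set (and extended arbitrarily elsewhere, as in the proof of Lemma~\ref{2.1}), the conclusion is immediate from \L o\'s-style bookkeeping via Lemma~\ref{2.1}. No appeal to the full theorem of \L o\'s is needed, in line with the authors' stated intention of giving elementary arguments for these foundational lemmas.
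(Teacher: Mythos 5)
Your proof is correct and takes essentially the same approach as the paper's: both directions are argued directly from the definition of an atom, with Lemma~\ref{2.1} translating unit-ness into ultrafilter statements and the union property of ultrafilters splitting the set where one factor is a unit. You are in fact slightly more careful than the printed proof, which neither checks explicitly that $r^*$ is a non-unit in the backward direction nor excludes, before choosing the componentwise decompositions in the forward direction, the possibility that $r_\lambda$ fails to be an atom because it is a unit rather than a reducible non-unit.
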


\begin{proof}
Let $U = \{ \lambda \in \Lambda \mid r_\lambda \in \mathcal{A}(H_\lambda) \}$. If $U \in \mathcal{U}$ and $a, b \in H$ such that $r^* = a^* b^*$ then $V = U \cap \{\lambda \in \Lambda \mid r_\lambda = a_\lambda b_\lambda \} \in \mathcal{U}$. Since $r_\lambda$ is an atom for all $\lambda \in U$, it follows that $V \subseteq W_a \cup W_b$, where $W_s = \{ \lambda \in \Lambda \mid s_\lambda \in H_\lambda^\times \}$ for $s \in \{ a,b \}$. Since $\mathcal{U}$ is an ultrafilter, it follows that $W_a \in \mathcal{U}$ or $W_b \in \mathcal{U}$. By Lemma~\ref{2.1}, $a^*$ is a unit or $b^*$ is a unit. Therefore, $r^* \in \mathcal{A}(H^*)$.

Conversely, if $\Lambda \setminus U \in \mathcal{U}$, for every $\lambda \in \Lambda \setminus U$, let $a_\lambda, b_\lambda \in H_\lambda$ be non-units such that $r_\lambda = a_\lambda b_\lambda$. If $\lambda \in U$, let $a_\lambda = b_\lambda = 1$ (or any other element of $H_\lambda$). Then $a^*$ and $b^*$ are non-units such that $r^* = a^*b^*$ and, therefore, $r^*$ is not an atom.
\end{proof}

\begin{remark}\label{2.3} \phantom{}
\begin{itemize}
\item[(1)] If $U = \{\lambda \in \Lambda \mid H_\lambda \text{ is non-atomic} \} \in \mathcal{U}$, then $H^*$ is non-atomic. Indeed, let $a_\lambda$ be a non-zero non-unit of $H_\lambda$ without factorization into atoms, for $\lambda \in U$, and let $a_\lambda = 1$ for $\lambda \in \Lambda \setminus U$. Assume to the contrary that $a^* = u_1^* \cdots u_n^*$ with $u_1,\ldots,u_n \in H$ such that $u_1^*,\ldots,u_n^* \in \mathcal{A}(H^*)$. Then there exists some $V \in \mathcal{U}$ such that $V \subseteq U$ and, for all $\lambda \in V$, we have $a_\lambda = u_1^{(\lambda)}\cdots u_n^{(\lambda)}$ and $u_1^{(\lambda)},\ldots,u_n^{(\lambda)} \in \mathcal{A}(H_\lambda)$. In particular, $V \neq \emptyset$. Let $\lambda \in V$. Then $a_\lambda = u_1^{(\lambda)} \cdots u_n^{(\lambda)}$ is a factorization into atoms, which is a contradiction.
\item[(2)] The converse to (1) is not true. What is more, the non-trivial ultraproduct $D^*$ of factorial domains $D_\lambda$ is never atomic. As an illustration, consider $\Lambda = \mathbb{N}$, and for every $\lambda \in \mathbb{N}$, let $D_\lambda$ be a factorial domain.

The non-trivial ultrafilter $\mathcal{U}$ on $\mathbb{N}$ does not contain finite sets. Therefore every $U \in \mathcal{U}$ is an unbounded subset of $\mathbb{N}$. Let $p_\lambda \in D_\lambda$ be a prime element and for every $\lambda \in \mathbb{N}$, define $r_\lambda = p_\lambda^\lambda$. We claim that $r^*$ has no factorization into atoms of $D^*$.

Assume to the contrary that $r^* = u_1^* \cdots u_n^*$ with $u_i \in D = \prod D_\lambda$ and $u_i^* \in \mathcal{A}(D^*)$ for all $i \in \{1,\ldots,n\}$. Let 
\begin{align*}
U = \{\lambda \in \Lambda \mid r_\lambda = u_1^{(\lambda)}\cdots u_n^{(\lambda)} \} \cap \bigcap_{i = 1}^n \{\lambda \in \Lambda \mid u_i^{(\lambda)} \in \mathcal{A}(D_\lambda)\}.
\end{align*}
Clearly, $U$ is an element of $\mathcal{U}$ and is therefore unbounded in $\mathbb{N}$. Let $\lambda \in U$ with $\lambda>n$. Then $p_\lambda^\lambda = r_\lambda = u_1^{(\lambda)} \cdots u_n^{(\lambda)}$, which is a contradiction to our assumption that $D_\lambda$ be factorial.  
\end{itemize}
\end{remark}

\begin{remark}
The mechanism explored in Remark~\ref{2.3} reminds of the ``blow-up phenomenon'', named so by Cossu and Tringali~\cite{Laura-Salvo}, but somehow in a reversed way. It seems to be intuitive that the element $(p_1^1,p_2^2,\ldots,p_\lambda^\lambda,\ldots)^*$ in our example has a factorization; it should be a prime power which it actually is in a non-standard universe of set-theory. But there is no obvious way to associate this idea with classical factorizations. It might be fertile to employ a variant of the novel ideas of~\cite{Laura-Salvo}.
\end{remark}

Since ultraproducts are rarely atomic, it makes sense for us to only consider elements of $H^*$ which factor as products of atoms. We denote by $H^*_A$ the set of all elements of $H^*$ which have some factorization. Note that $H^*_A$ is the submonoid of $H^*$ generated by units and atoms of $H^*$. In particular,
\begin{align*}
H^*_A = \{a^* \mid a \in H \text{ and } \exists U \in \mathcal{U} \ \exists N \in \mathbb{N} \ \forall \lambda \in U \ \{0,\ldots ,N\} \cap L(a_\lambda) \neq \emptyset \}
\end{align*}
is the set of all elements of $H^*$ which have some representative $a \in H$ such that there is $N \in \mathbb{N}$ such that for each $\lambda$ in some set of $\mathcal{U}$ there exists a factorization of $a_\lambda$ in at most $N$ atoms. \\

For the remainder of this section, if $L \subseteq \mathbb{N}_{\geq 2}$ is a multiset and $\ell \in \mathbb{N}_{\geq 2}$, we write $\#_L(\ell) = \infty$ if and only if $\#_L(\ell)$ is an infinite cardinal.
The next theorem gives a full description of sets of lengths $L(r^*)$ of elements $r^*$ of ultraproducts up to the distinction of infinite multiplicities. We deal with infinite multiplicities of lengths of factorizations in Section~\ref{section:infinite} separately in order to get a smooth statement (without additional set-theoretical assumptions) in the more classical context of the present section that might be interesting to a broader audience.

\begin{theorem}\label{2.4}
Let $(H_\lambda)_{\lambda \in \Lambda}$ be a family of commutative cancellative monoids. Denote $H = \prod_{\lambda \in \Lambda} H_\lambda$ and let $H^* = \prod_{\lambda \in \Lambda}^\mathcal{U} H_\lambda$ be the ultraproduct with respect to an ultrafilter $\mathcal{U}$ on $\Lambda$. \\
Let $r \in H$, $\ell \in \mathbb{N}_{0}$ and $n \in \mathbb{N}_0$.
\begin{itemize}
\item[(1)] $\#_{L(r^*)}(\ell) = n$ if and only if $\{\lambda \in \Lambda \mid \#_{L(r_\lambda)} (\ell)= n \} \in \mathcal{U}$.
\item[(2)] $\#_{L(r^*)}(\ell) = \infty$ if and only if $\{\lambda \in \Lambda \mid \#_{L(r_\lambda)} (\ell) > m \} \in \mathcal{U}$ for all $m \in \mathbb{N}$.
\end{itemize}
\end{theorem}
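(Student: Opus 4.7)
The plan is to reduce Theorem~\ref{2.4} to Lemmas~\ref{2.1} and~\ref{2.2} via an auxiliary \emph{translation lemma}: given tuples $(u_1,\ldots,u_\ell),(v_1,\ldots,v_\ell)\in H^\ell$ whose components are atoms in $H^*$ and satisfy $u_1^*\cdots u_\ell^*=v_1^*\cdots v_\ell^*=r^*$, the two factorizations are essentially the same in $H^*$ if and only if the set of $\lambda$ on which $(u_1^{(\lambda)},\ldots,u_\ell^{(\lambda)})$ and $(v_1^{(\lambda)},\ldots,v_\ell^{(\lambda)})$ are essentially the same factorizations of $r_\lambda$ lies in $\mathcal{U}$. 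The forward direction fixes a witnessing permutation and associate units in $H^*$ and intersects the $\mathcal{U}$-large sets provided by Lemma~\ref{2.1}. The backward direction is the main obstacle: the hypothesised set is the union, over the finite symmetric group on $\ell$ letters, of the sets on which a specific permutation $\sigma$ witnesses essential sameness in $H_\lambda$; finiteness of this group together with the fact that a finite union in $\mathcal{U}$ has a summand in $\mathcal{U}$ picks out a single $\sigma$, on which componentwise associate witnesses (extended arbitrarily elsewhere) glue to global units via Lemma~\ref{2.1}.

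For the ``if'' direction of~(1), I pick for each $\lambda$ in the given $\mathcal{U}$-large set $U$ representatives of the $n$ classes of length-$\ell$ factorizations of $r_\lambda$ and extend arbitrarily to $H$. Lemma~\ref{2.2} and the translation lemma yield $n$ pairwise essentially distinct atomic factorizations of $r^*$. Any further length-$\ell$ atomic factorization $r^*=v_1^*\cdots v_\ell^*$ lifts to $H$, and on the intersection with $U$ the componentwise factorization must equal one of the $n$ chosen representatives up to essential sameness; a pigeonhole on $\{1,\ldots,n\}$ combined with the ultrafilter property yields a single index whose matching set is in $\mathcal{U}$, and the translation lemma identifies $(v_1^*,\ldots,v_\ell^*)$ with that representative. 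For the ``only if'' direction, lifting $n$ essentially distinct factorizations of $r^*$ gives $\#_{L(r_\lambda)}(\ell)\geq n$ on some $V\in\mathcal{U}$; if $\{\lambda : \#_{L(r_\lambda)}(\ell)=n\}\notin\mathcal{U}$, then $\{\lambda : \#_{L(r_\lambda)}(\ell)\geq n+1\}\in\mathcal{U}$, and choosing an $(n+1)$-th essentially distinct factorization componentwise (via the axiom of choice) yields after passage to $H^*$ an extra atomic length-$\ell$ factorization of $r^*$, contradicting $\#_{L(r^*)}(\ell)=n$.

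Part~(2) then follows: the ``only if'' direction runs the lifting from the ``only if'' of~(1) with $m+1$ factorizations in place of $n$, while the ``if'' direction is a disjoint-sets argument, since otherwise part~(1) would force the two disjoint sets $\{\lambda : \#_{L(r_\lambda)}(\ell)=n\}$ and $\{\lambda : \#_{L(r_\lambda)}(\ell)>n\}$ into $\mathcal{U}$ simultaneously.
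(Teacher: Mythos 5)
Your proposal is correct and follows essentially the same route as the paper: lift factorizations between $H^*$ and the components, apply Lemmas~\ref{2.1} and~\ref{2.2}, and use the ultrafilter's behaviour on finite unions (pigeonhole over the finitely many permutations in $S_\ell$, respectively over the finitely many multiplicities $0,\ldots,m$). Your explicitly isolated translation lemma is a clean packaging of steps the paper performs inline or leaves implicit (e.g.\ the assertion that the set $V$ of indices where the lifted factorizations stay pairwise essentially different lies in $\mathcal{U}$), and its permutation-decomposition argument is exactly the one the paper later spells out when proving that $H^*_A$ is factorial.
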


\begin{proof}
To show (1), first let $\#_{L(r^*)}(\ell) = n$. Then we can pick exactly $n$ essentially different factorizations of $r^*$ into $\ell$ atoms
\begin{align*}
r^* &= u_{1,1}^*\cdots u_{1,\ell}^* \\
&\vdots \\
r^* &= u_{n,1}^* \cdots u_{n,\ell}^*.
\end{align*}
Let $U \in \mathcal{U}$ such that for all $\lambda \in U$ the following hold:
\begin{itemize}
\item[(i)] $r_\lambda = u_{i,1}^{(\lambda)}\cdots u_{i,\ell}^{(\lambda)}$ for all $i \in \{1,\ldots,n\}$.
\item[(ii)] $u_{i,j}^{(\lambda)}$ is an atom for all $i \in \{1,\ldots,n\}$ and $j \in \{1,\ldots,\ell\}$.
\end{itemize}
Let $V$ be the set of all $\lambda \in U$ such that the factorizations $u_{i,1}^{(\lambda)}\cdots u_{i,\ell}^{(\lambda)}$ are pairwise essentially different. Then $U_{\geq} = \{\lambda \in \Lambda \mid \#_{L(r_\lambda)} (\ell) \geq n \} \supseteq V \in \mathcal{U}$. Therefore, either $\{ \lambda \in \Lambda \mid \#_{L(r_\lambda)}(l) = n \}  \in \mathcal{U}$ or $\{ \lambda \in \Lambda \mid \#_{L(r_\lambda)}(l) > n \}  \in \mathcal{U}$. Assume to the contrary that $\{ \lambda \in \Lambda \mid \#_{L(r_\lambda)}(l) > n \}  \in \mathcal{U}$. Then we can construct at least $n+1$ pairwise essentially different factorization of $r^*$ into atoms, which is a contradiction.

Conversely, assume that $U = \{\lambda \in \Lambda \mid \#_{L(r_\lambda)} (\ell)= n \} \in \mathcal{U}$. Then we can immediately construct $n$ essentially different factorizations of $r^*$ into $\ell$ atoms of $H^*$ and every factorization or $r^*$ of length $\ell$ is essentially the same as one of these.

For the proof of (2), first assume that $\{\lambda \in \Lambda \mid \#_{L(r_\lambda)} (\ell) > m \} \in \mathcal{U}$ for all $m \in \mathbb{N}$. Then we can, in particular, construct $m$ essentially different factorizations of $r^*$ into $\ell$ atoms of $H^*$ for every $m \in \mathbb{N}$. Therefore $\#_{L(r^*)}(\ell) = \infty$.

Conversely, assume that there exists some $m \in \mathbb{N}$ such that 
\begin{align*}
\bigcup_{n=0}^m \{\lambda \in \Lambda \mid \#_{L(r_\lambda)} (\ell) =n \} =  \{\lambda \in \Lambda \mid \#_{L(r_\lambda)} (\ell) \leq m \} \in \mathcal{U}.
\end{align*}
Then, since $\mathcal{U}$ is an ultrafilter, there exists some $n \in \{0,\ldots ,m\}$ such that $\{\lambda \in \Lambda \mid \#_{L(r_\lambda)} (\ell) =n \} \in \mathcal{U}$. By (1), it follows that $\#_{L(r^*)}(\ell) = n < \infty$.
\end{proof}

A cancellative monoid $H$ is \textit{half-factorial} if it is atomic and the set of lengths $L(r)$ is a singleton for every non-unit $r \in H$. Half-factoriality plays an interesting role in the study of class groups of Dedekind domains and, in particular, rings of integers of number fields. By a famous result of Carlitz~\cite{Carlitz-halffactorial}, a ring of integers in a number field has class number at most $2$ if and only if it is half-factorial. Geroldinger and Göbel~\cite{Geroldinger-Goebel} proved, for large classes of Abelian groups, that they appear as the class group of a half-factorial Dedekind domain. However, for general Abelian groups, this realization problem is still open.

\begin{corollary}\label{2.5}
Let $(H_\lambda)_{\lambda \in \Lambda}$ be a family of commutative cancellative monoids, $H = \prod_{\lambda \in \Lambda} H_\lambda$ and let $H^* = \prod_{\lambda \in \Lambda}^\mathcal{U} H_\lambda$ be the ultraproduct with respect to some ultrafilter $\mathcal{U}$ on $\Lambda$. \\
Let $r \in H$ and $\ell \in \mathbb{N}_{0}$.
\begin{itemize}
\item[(1)] $\ell \in L(r^*)$ if and only if $\{\lambda \in \Lambda \mid \ell \in L(r_\lambda) \} \in \mathcal{U}$.
\item[(2)] If $\{\lambda \in \Lambda \mid H_\lambda \text{ is half-factorial} \}$ then the maximal atomic submonoid
\begin{align*}
H^*_A = \{r^* \mid r \in H \text{ and } \exists U \in \mathcal{U} \ \exists N \in \mathbb{N} \ \forall \lambda \in \Lambda \ \{0,\ldots ,N\} \cap L(r_\lambda) \neq \emptyset \}
\end{align*}
of $H^*$ is half-factorial.
\end{itemize}
\end{corollary}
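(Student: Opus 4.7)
The plan is to derive (1) directly from Theorem~\ref{2.4}, and to obtain (2) by combining (1) with a finite partition argument that exploits the uniform bound $N$ built into the definition of $H^*_A$.

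For (1), the forward direction splits into the two cases of Theorem~\ref{2.4}. If $\#_{L(r^*)}(\ell)$ is a positive integer $n$, then Theorem~\ref{2.4}(1) places the set $\{\lambda : \#_{L(r_\lambda)}(\ell) = n\}$, which is contained in $\{\lambda : \ell \in L(r_\lambda)\}$, into $\mathcal{U}$. If the multiplicity is infinite, Theorem~\ref{2.4}(2) applied with $m=1$ supplies the same conclusion. For the converse I would argue by contradiction: if $\ell \notin L(r^*)$ then $\#_{L(r^*)}(\ell)=0$, so Theorem~\ref{2.4}(1) places the complementary set $\{\lambda : \#_{L(r_\lambda)}(\ell)=0\}$ into $\mathcal{U}$, contradicting the hypothesis (their intersection would have to be in $\mathcal{U}$ but is empty).

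For (2), I first observe that $H^*_A$ is atomic by construction: each of its elements is a product of atoms of $H^*$, and these atoms remain atoms of $H^*_A$ because $H^*_A$ contains $(H^*)^\times$. To check half-factoriality of a non-unit $r^* \in H^*_A$, I choose $U \in \mathcal{U}$ and $N \in \mathbb{N}$ as in the description of $H^*_A$ and intersect $U$ with the set of indices where $H_\lambda$ is half-factorial to obtain $U' \in \mathcal{U}$. For every $\lambda \in U'$ the set $L(r_\lambda)$ is a singleton $\{n_\lambda\}$ with $n_\lambda \in \{0,1,\ldots,N\}$. Partitioning $U'$ into the finitely many sets $U_n = \{\lambda \in U' : n_\lambda = n\}$, the ultrafilter property forces exactly one piece $U_{n_0}$ to lie in $\mathcal{U}$. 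Part (1) then yields $n_0 \in L(r^*)$, and for any $m \in L(r^*)$, part (1) again places $\{\lambda : m \in L(r_\lambda)\}$ in $\mathcal{U}$, so its intersection with $U_{n_0}$ is non-empty, forcing $m = n_0$.

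I do not anticipate a real obstacle here; the main subtlety to keep in mind is that the definition of $H^*_A$ only guarantees the existence of \emph{one} factorization of $r_\lambda$ of length at most $N$, not a bound on every factorization length. The half-factoriality hypothesis on almost all $H_\lambda$ is exactly what upgrades this single short factorization to a uniform bound on all lengths of factorizations of $r_\lambda$, and that is what makes the finite partition on the singletons $\{n_\lambda\}$ available.
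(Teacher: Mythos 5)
Your proposal is correct and follows the same route as the paper: part (1) is read off from Theorem~\ref{2.4} by forgetting multiplicities (handling the finite and infinite cases separately, with the converse via the $n=0$ case), and part (2) is deduced from part (1), your finite partition over the possible singleton values $n_\lambda \in \{0,\ldots,N\}$ being one standard way to make the paper's ``follows immediately'' precise. The only cosmetic simplification available is that, once you know $L(r^*) \neq \emptyset$ from the definition of $H^*_A$, any two lengths $m, m'$ in $L(r^*)$ can be compared directly by intersecting the two $\mathcal{U}$-sets from part (1) with the set of half-factorial indices, which bypasses the partition entirely.
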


\begin{proof}
(1) is a direct consequence of Theorem \ref{2.4} by disregarding the multiplicity of $\ell$ in the multisets of lengths. \\
(2) follows immediately from (1).
\end{proof}

It is clear from the previous corollary that, in particular, if $\{\lambda \in \Lambda \mid H_\lambda \text{ is factorial} \} \in \mathcal{U}$ then $H^*_A$ is half-factorial. We will see now that even more is true.

\begin{proposition}
If $\{\lambda \in \Lambda \mid H_\lambda \text{ is factorial} \} \in \mathcal{U}$ then $H^*_A$ is a factorial submonoid of $H^*$.
\end{proposition}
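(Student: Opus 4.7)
The plan is to combine the half-factoriality established in Corollary~\ref{2.5}(2) with a uniqueness argument transferred component-wise from each factorial $H_\lambda$ via the ultrafilter. Note that $H^*_A$ is atomic by its very definition, so only essential uniqueness of factorizations needs to be shown.

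Let $r^* \in H^*_A$ have two factorizations into atoms of $H^*$; by Corollary~\ref{2.5}(2) they share a common length $\ell$, and I may write them as
\[
r^* = u_1^* \cdots u_\ell^* = v_1^* \cdots v_\ell^*.
\]
Using Lemma~\ref{2.2} and intersecting finitely many sets in $\mathcal{U}$, I would first pick $U \in \mathcal{U}$ contained in $\{\lambda \in \Lambda \mid H_\lambda \text{ is factorial}\}$ such that for every $\lambda \in U$ each $u_i^{(\lambda)}$ and each $v_j^{(\lambda)}$ is an atom of $H_\lambda$ and $r_\lambda = u_1^{(\lambda)}\cdots u_\ell^{(\lambda)} = v_1^{(\lambda)}\cdots v_\ell^{(\lambda)}$.

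For every $\lambda \in U$, factoriality of $H_\lambda$ furnishes a permutation $\sigma_\lambda$ of $\{1,\dots,\ell\}$ and units $\varepsilon_i^{(\lambda)} \in H_\lambda^\times$ with $u_i^{(\lambda)} = \varepsilon_i^{(\lambda)} v_{\sigma_\lambda(i)}^{(\lambda)}$. The key observation is that the symmetric group on $\{1,\ldots,\ell\}$ is finite, so the partition of $U$ into the (at most $\ell!$) pieces $U_\tau = \{\lambda \in U \mid \sigma_\lambda = \tau\}$ forces, by the ultrafilter axioms, exactly one piece $U_\sigma$ to lie in $\mathcal{U}$. Setting $\varepsilon_i^{(\lambda)} = 1$ for $\lambda \notin U_\sigma$ defines elements $\varepsilon_i \in H$ such that $u_i^{(\lambda)} = \varepsilon_i^{(\lambda)} v_{\sigma(i)}^{(\lambda)}$ on the set $U_\sigma \in \mathcal{U}$, hence $u_i^* = \varepsilon_i^* v_{\sigma(i)}^*$ in $H^*$. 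By Lemma~\ref{2.1}, each $\varepsilon_i^*$ lies in $(H^*)^\times$, and since $(H^*)^\times \subseteq H^*_A$ (units admit the empty factorization), $\varepsilon_i^*$ is a unit of $H^*_A$. This shows $u_i^* \sim v_{\sigma(i)}^*$ in $H^*_A$ for every $i$, as required.

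The main obstacle is the transition from the family of pointwise permutations $(\sigma_\lambda)_{\lambda \in U}$, which a priori varies wildly with $\lambda$, to a single global permutation $\sigma$ that works on a set belonging to $\mathcal{U}$. The finiteness of the symmetric group is indispensable here: it reduces the situation to partitioning $U \in \mathcal{U}$ into finitely many pieces and invoking the defining property of an ultrafilter. Without this finiteness, no analogous matching could be extracted. The same device of padding with $1$ on the complement and applying Lemma~\ref{2.1} to promote component-wise units to a unit of $H^*$ is the technical companion used already in the proofs of Lemmas~\ref{2.1} and~\ref{2.2}.
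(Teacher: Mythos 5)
Your proof is correct and follows essentially the same route as the paper's: reduce to equal lengths via Corollary~\ref{2.5}(2), pass to a set $U \in \mathcal{U}$ where all components are atoms and all $H_\lambda$ are factorial, and exploit the finiteness of the symmetric group to extract a single permutation valid on a set in $\mathcal{U}$. The only (harmless) difference is that you spell out the construction of the unit witnesses $\varepsilon_i^*$ via Lemma~\ref{2.1}, a step the paper leaves implicit.
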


\begin{proof}
We know by Corollary \ref{2.5} that $H^*_A$ is half-factorial. So, it suffices to consider factorizations of equal length. Let $u_1,\ldots ,u_n,v_1,\ldots ,v_n \in H$ such that $u_i^*,v_i^* \in \mathcal{A}(H^*)$ for all $i \in \{1,\ldots ,n\}$ and $u_1^*\cdots u_n^* = v_1^*\cdots v_n^*$. There exists some $U \in \mathcal{U}$ such that for all $\lambda \in U$ we have $u_i^{(\lambda)}, v_i^{(\lambda)} \in \mathcal{A}(H_\lambda)$, $H_\lambda$ is factorial and $u_1^{(\lambda)}\cdots u_n^{(\lambda)} = v_1^{(\lambda)}\cdots v_n^{(\lambda)}$. Therefore, for every $\lambda \in U$ there exists some permutation $\sigma$ such that $u_i^{(\lambda)} \sim v_{\sigma(i)}^{(\lambda)}$ for all $i \in \{1,\ldots ,n\}$. Write the symmetric group on $\{1,\ldots,n\}$ as $S_n = \{ \sigma_1,\ldots ,\sigma_k\}$, where $k = n!$, and for every $r \in \{1,\ldots ,k\}$ let 
\begin{align*}
U_r = \{ \lambda \in U \mid \forall i \in \{ 1,\ldots ,n \} \ u_i^{(\lambda)} \sim v_{\sigma_r(i)}^{(\lambda)} \}.
\end{align*}
Then $U = U_1 \cup \ldots  \cup U_k$ and, therefore, there exists some $r \in \{1,\ldots ,k\}$ such that $U_r \in \mathcal{U}$. Set $\sigma = \sigma_r$. Then $u_i^{(\lambda)} \sim v_{\sigma(i)}^{(\lambda)}$ for all $\lambda \in U_r$ and all $i \in \{1,\ldots ,n\}$. It follows that $u_i^* \sim v_{\sigma(i)}^*$ for all $i \in \{1,\ldots ,n\}$. This shows that $H^*_A$ is factorial.
\end{proof}

\section{First-order properties for non-unique factorizations and monster examples}\label{section:examples}

We now consider several properties that are standard and useful in the theory of non-unique factorizations. As will turn out, none of them can be formulated as first-order statement in the language of monoids. Since it is not the place to recall all these notions here, we want to refer to the standard reference~\cite{GHK}.

%
%
%
%

\begin{proposition}
The following properties of commutative cancellative monoids cannot be formulated as first-order statements in the language of monoids:
\begin{enumerate}
\item atomic
\item bounded factorizations (BF)
\item finite factorizations (FF)
\item half-factorial
\item factorial
\item $v$-Noetherian
\item Krull
\end{enumerate}
\end{proposition}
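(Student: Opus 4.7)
The plan is to apply the contrapositive of \L o\'s's theorem: if any of the seven properties $P$ could be captured by a first-order sentence $\varphi_P$ in the language of monoids, then whenever $\{\lambda \in \Lambda : H_\lambda \models \varphi_P\} \in \mathcal{U}$ the ultraproduct $H^*$ would itself satisfy $\varphi_P$. So for each $P$ it suffices to exhibit one family $(H_\lambda)_{\lambda \in \Lambda}$ of commutative cancellative monoids all satisfying $P$ whose ultraproduct does not.

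I would handle all seven cases in a single stroke, using the observation that each property on the list implies atomicity: for atomic, half-factorial and factorial this is built into the definitions given in the preliminaries; for BF and FF it is built into the standard conventions of~\cite{GHK}; and for $v$-Noetherian and Krull it follows from the ACC on divisorial ideals, see~\cite{GHK}. It therefore suffices to produce a single family of factorial monoids $(H_\lambda)$ whose ultraproduct is not atomic, since factorial monoids simultaneously satisfy every property on the list, while a monoid that fails to be atomic fails every property on the list.

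Such a family has essentially already been constructed in Remark~\ref{2.3}(2); its proof carries over verbatim from factorial domains to factorial monoids. Concretely, take $\Lambda = \mathbb{N}$ with a non-principal ultrafilter $\mathcal{U}$, let each $H_\lambda$ be a factorial monoid containing an atom $p_\lambda$, and set $r_\lambda = p_\lambda^\lambda$. Any putative factorization $r^* = u_1^* \cdots u_n^*$ into atoms of $H^*$ would, by Lemma~\ref{2.2} and the definition of $\equiv$, restrict on a set $U \in \mathcal{U}$ to genuine factorizations $p_\lambda^\lambda = u_1^{(\lambda)} \cdots u_n^{(\lambda)}$ into exactly $n$ atoms of $H_\lambda$. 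Since $\mathcal{U}$ is non-principal, $U$ is unbounded in $\mathbb{N}$, so there is $\lambda \in U$ with $\lambda \neq n$, contradicting uniqueness of factorization in the factorial monoid $H_\lambda$.

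The technical heart of the argument has thus been absorbed into Remark~\ref{2.3}(2), and the remainder is bookkeeping: the only step that is not already in front of us is the implication ``$v$-Noetherian or Krull $\Rightarrow$ atomic'', which is standard in the monograph~\cite{GHK}. I do not foresee a genuine obstacle; the main conceptual point is the uniformization that lets one example defeat all seven properties at once.
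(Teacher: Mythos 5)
Your argument is correct and is essentially the paper's own proof: the paper likewise reduces all seven properties to the chain ``factorial $\Rightarrow P \Rightarrow$ atomic'' and then invokes the non-atomic ultraproduct of factorial (domains') monoids from Remark~\ref{2.3}(2) via the Theorem of \L o\'s. Your only addition is spelling out that the construction of Remark~\ref{2.3}(2) transfers verbatim from factorial domains to factorial monoids, which is a harmless and accurate clarification.
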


\begin{proof}
For every property $P$ in the list, factorial $\Rightarrow$ $P$ $\Rightarrow$ atomic. So, by the Theorem of \L o\'s, it suffices to construct an ultraproduct of factorial monoids that is not atomic. This is Remark~\ref{2.3}(2).
\end{proof}

Our goal for this section is to construct an integral domain that realizes every multiset as a multiset of lengths of some element. Analogous examples have been known in the setting of finite multisets, see Example~\ref{3.4}. To the best of our knowledge, we are the first to give an example in the infinite setting. 

In the following, to avoid confusion, we use the symbol $\omega$ for the set of all non-negative integers as an index set while the notation $\mathbb{N}_0$ is used for sets of lengths.

\begin{theorem} \label{3.3}
Let $(H_\lambda)_{\lambda \in \omega}$ be a family of cancellative monoids such that for all $\lambda \in \omega$ the following property holds:

(F) Every finite multiset $L \subseteq \mathbb{N}_{\geq 2}$ is the multiset of lengths of some element in $H_\lambda$.

Let $\mathcal{U}$ be a non-principal ultrafilter on $\omega$.
Then every multiset $L \subseteq \mathbb{N}_{\geq 2}$ is the multiset of lengths of some element $r^* \in H^* = \prod_{\lambda \in \omega}^\mathcal{U} H_\lambda$, in the sense that for all $\ell \in \mathbb{N}_{\geq 2}$
\begin{itemize}
\item[(i)] $\#_{L(r^*)}(\ell) = \#_L(\ell)$ if $\#_L(\ell) < \infty$ and
\item[(ii)] $\#_{L(r^*)}(\ell) = \infty$ if and only if $ \#_L(\ell) = \infty$.
\end{itemize}
\end{theorem}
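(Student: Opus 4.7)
The plan is a straightforward diagonal construction combined with Theorem~\ref{2.4}. Given any multiset $L \subseteq \mathbb{N}_{\geq 2}$, I would approximate $L$ from below by a sequence of finite multisets $L_\lambda$ that eventually exhaust it, realize each $L_\lambda$ in $H_\lambda$ via property~(F), and then read off the multiset of lengths of the resulting ultraproduct element via Theorem~\ref{2.4}.

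Concretely, for each $\lambda \in \omega$ I would define a finite multiset $L_\lambda \subseteq \mathbb{N}_{\geq 2}$ by truncating both in the ``support direction'' and in the ``multiplicity direction'':
\[
\#_{L_\lambda}(\ell) \;=\; \begin{cases} \min(\#_L(\ell),\lambda) & \text{if } \ell \in \{2,\ldots,\lambda+1\}, \\ 0 & \text{otherwise.} \end{cases}
\]
This is indeed a finite multiset (only finitely many lengths have positive multiplicity, and each such multiplicity is at most $\lambda$), so by property~(F) there exists $r_\lambda \in H_\lambda$ with $L(r_\lambda) = L_\lambda$ as multisets. I then set $r = (r_\lambda)_{\lambda \in \omega} \in H$ and claim that the class $r^*$ has the desired properties.

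To verify this, fix $\ell \in \mathbb{N}_{\geq 2}$. If $\#_L(\ell) = n < \infty$, then for every $\lambda \geq \max(\ell-1, n)$ one has $\#_{L(r_\lambda)}(\ell) = \min(n,\lambda) = n$, so the equality holds on a cofinite subset of $\omega$; since $\mathcal{U}$ is non-principal and therefore contains every cofinite set, Theorem~\ref{2.4}(1) gives $\#_{L(r^*)}(\ell) = n$, establishing~(i). If instead $\#_L(\ell) = \infty$, then for every $m \in \mathbb{N}$ and every $\lambda \geq \max(\ell-1, m+1)$ we have $\#_{L(r_\lambda)}(\ell) = \lambda > m$, whence $\{\lambda \mid \#_{L(r_\lambda)}(\ell) > m\}$ is cofinite and Theorem~\ref{2.4}(2) delivers $\#_{L(r^*)}(\ell) = \infty$. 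The converse implication in~(ii) is immediate from~(i), since if $\#_L(\ell)$ were finite then~(i) would force $\#_{L(r^*)}(\ell)$ to be finite as well.

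I do not anticipate any serious obstacle: the only real design choice is ensuring that the truncations $L_\lambda$ remain finite, so that~(F) applies, while still growing fast enough in both directions to capture $L$ in the limit. The diagonal choice above accomplishes exactly this, and the non-principality of $\mathcal{U}$ then does the rest by placing every cofinite subset of $\omega$ into the filter.
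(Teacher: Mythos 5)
Your proof is correct and follows essentially the same route as the paper: approximate $L$ from below by finite multisets realized componentwise via (F), and then read off the multiplicities with Theorem~\ref{2.4}, using that a non-principal ultrafilter on $\omega$ contains every cofinite set. The only cosmetic difference is that you truncate $L$ explicitly in both support and multiplicity, whereas the paper enumerates the occurrences of $L$ by a bijection $f\colon \omega \to L$ and takes initial segments; your version has the minor advantage of treating finite and countably infinite $L$ uniformly, without a case split.
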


\begin{proof}
Let $L \subseteq \mathbb{N}_{\geq 2}$ be a multiset. Since we do not distinguish infinite cardinalities, we can assume that $\#_L( \ell)$ is countable for every $\ell \in \mathbb{N}_{\geq 2}$. Then $L$ is countable (as a countable union of countable multisets containing one single with countable multiplicity). If $L$ is finite, we are done by picking for every $\lambda \in \Lambda$ an element $r_\lambda \in H_\lambda$ with set of lengths $L$ and setting $r^* = (r_\lambda)^*$. If $|L| = \aleph_0$ then there exists a bijection $f: \omega \to L$ (i.e. every appearance of some element of $L$ gets assigned to exactly one number in $\omega$). Using Property (F), for every $\lambda \in \omega$, pick $r_\lambda \in H_\lambda$ such that $L(r_\lambda) = \{f(0),\ldots ,f(\lambda)\}$. We claim that $L(r^*) = L$. For this, it suffices to show that $\#_{L(r^*)}(\ell) = \#_{L}(\ell)$ for all $\ell \in \mathbb{N}_{\geq 2}$. So let $\ell \in \mathbb{N}_{\geq 2}$.

\textbf{Case 1:} Let $\#_{L}(\ell) = \infty$. By Corollary~\ref{2.5}, we have to show for every $m \in \mathbb{N}$ that $\{\lambda \in \omega \mid \#_{L(r_\lambda)}(\ell) > m \} \in \mathcal{U}$. Let $m \in \mathbb{N}$. Then there exists some $\lambda \in \omega$ such that the multiplicity of $\ell$ in $L(r_\lambda) = \{f(0),\ldots ,f(\lambda)\}$ is bigger than $m$, because $\#_{L}(\ell) = \infty$ and $f$ is bijective. This is clearly also true for all $\mu \in \omega$ with $\mu \geq \lambda$. This completes the proof of Case 1 because $\{\mu \in \omega \mid \mu \geq \lambda \} \in \mathcal{U}$ which follows from the fact that its complement in $\omega$ is finite.

\textbf{Case 2:} Let $\#_{L}(\ell) = n < \infty$. We have to show that $\{\lambda \in \omega \mid \#_{L(r_\lambda)} (\ell)= n \} \in \mathcal{U}$. Since the multiplicity of $\ell$ in $L$ is $n$ and $f$ is bijective, there exists some $\lambda \in \omega$ such that, for all $\mu \geq \lambda$, the multiplicity of $\ell$ in $\{f(0),\ldots ,f(\mu)\} = L(r_\mu)$ is $n$. Since $\omega \setminus \{\mu \in \omega \mid \mu \geq \lambda\} = \{0,\ldots ,\lambda - 1 \}$ is finite, it follows that $\{\lambda \in \Lambda \mid \#_{L(r_\lambda)} (\ell)= n \} \in \mathcal{U}$.
\end{proof}

To complete our example, we have to present classes of commutative cancellative monoids satisfying property (F). This can be done in different ways.

\begin{example} \label{3.4}
\begin{itemize}
\item[(1)] Let $G$ be an infinite Abelian group and $\mathcal{B}(G)$ the monoid of zero-sum sequences over $G$, see Section~\ref{section:application}. It is well-known~\cite[Section 3.4]{GHK} that $\mathcal{B}(G)$ is a Krull monoid with divisor class group $G$ and prime divisors in every class. \\
Let $K$ be a field. Then, by results of Gilmer~\cite{Gilmer},  and of Fadinger-Held and the present author~\cite{KrullPrimeDiv} the semigroup ring $K[\mathcal{B}(G)]$ is a Krull domain with divisor class group $G$ and prime divisors in every class. By a theorem of Kainrath~\cite{Kainrath}, every finite subset of $\mathbb{N}_{\geq 2}$ can be realized as set of lengths of some element in $\mathcal{B}(G)$ and, therefore, also of some element of $K[\mathcal{B}(G)]$. This result was extended to finite multisets for large classes of groups $G$ in~\cite[Theorem 7.4.1]{GHK}.
\item[(2)] For an integral domain $D$ with quotient field $K$, we denote by $\text{Int}(D) = \{ f \in K[X] \mid f(D) \subseteq D \}$ the ring of integer-valued polynomials over $D$. It is shown in~\cite{Frisch} that, for a Dedekind domain $D$ with infinitely many maximal ideals which are all of finite index (e.g. $D = \mathbb{Z}$), every finite multiset in $\mathbb{N}_{\geq 2}$ can be realized as multiset of lengths of some element in $\text{Int}(D)$. The analogous result for discrete valuation rings $D$ was proven in~\cite{FFW} and~\cite{Int-Krull-Prime}.
\end{itemize}
\end{example}

\begin{example}
Let $H^*$ be a monoid as in Theorem~\ref{3.3} which exists and can be chosen torsion-free (see~\cite{Gilmer_Semigroup-Rings}) by Example~\ref{3.4}. Let $K$ be a field. Then the semigroup ring $K[H^*]$ is an integral domain such that every multiset of integers greater than $2$ is the multiset of lengths of an element in $K[H^*]$ (in the sense of Theorem~\ref{3.3}).
\end{example}

\section{The distinction of infinite cardinals}\label{section:infinite}

We now want to look beyond the investigations of Sections~\ref{section:lengths} and~\ref{section:examples} and also consider different infinite cardinalities as multiplicities in sets of lengths. We are able to give partial generalizations of Theorem~\ref{2.4}. Using these, we generalize Theorem~\ref{3.3} (where we used $\aleph_0$ as a bound for the index set and the multiplicities in sets of lengths of the component monoids $H_\lambda$) from $\aleph_0$ to arbitrary measureable cardinals.

Again, let $(H_\lambda)_{\lambda \in \Lambda}$ be a family of commutative cancellative monoids, $\mathcal{U}$ an ultrafilter on $\Lambda$ and $H^*$ (respectively $H$) the ultraproduct with respect to $\mathcal{U}$ (respectively the product) of the $H_\lambda$. 

If $\kappa = \aleph_\alpha$ is a cardinal then we denote by $\kappa^+ = \aleph_{\alpha +1}$ the successor cardinal of $\kappa$. 

The next proposition lists the two partial statements of Theorem \ref{2.4} that can be extended without any further restrictions.

\begin{proposition} \label{4.1}
Let $r \in H = \prod_{\lambda \in \Lambda} H_\lambda$, $\ell \in \mathbb{N}_{\geq 2}$ and $\kappa$ a cardinal. 
\begin{itemize}
\item[(1)] If $\#_{L(r^*)}(\ell) \leq \kappa$ then $\{ \lambda \in \Lambda \mid \#_{L(r_\lambda)}(\ell) \leq \kappa \} \in \mathcal{U}$.
\item[(2)] If $\{ \lambda \in \Lambda \mid \#_{L(r_\lambda)}(\ell) > \mu \} \in \mathcal{U}$ for all cardinals $\mu < \kappa$ then $\#_{L(r^*)}(\ell) \geq \kappa$.
\end{itemize}
\end{proposition}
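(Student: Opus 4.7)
The plan is to prove both statements in parallel by adapting the ultrafilter argument used in the proof of Theorem~\ref{2.4}, replacing finite counting with arbitrary cardinals. For (1) I would argue by contraposition: assuming $\{\lambda \in \Lambda \mid \#_{L(r_\lambda)}(\ell) \leq \kappa\} \notin \mathcal{U}$, since $\mathcal{U}$ is an ultrafilter the set $U := \{\lambda \in \Lambda \mid \#_{L(r_\lambda)}(\ell) \geq \kappa^+\}$ lies in $\mathcal{U}$, and the goal becomes to produce $\kappa^+$ pairwise essentially different factorizations of $r^*$ of length $\ell$, which then forces $\#_{L(r^*)}(\ell) \geq \kappa^+ > \kappa$, contradicting the hypothesis.

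For the construction, I would use the axiom of choice to fix, for each $\lambda \in U$, a family $(F_i^{(\lambda)})_{i < \kappa^+}$ of pairwise essentially different length-$\ell$ factorizations $F_i^{(\lambda)} = u_{i,1}^{(\lambda)}\cdots u_{i,\ell}^{(\lambda)}$ of $r_\lambda$ into atoms of $H_\lambda$; for $\lambda \notin U$ the entries $u_{i,k}^{(\lambda)}$ can be set arbitrarily, say to $1$. Fix any $i < \kappa^+$. Lemma~\ref{2.2} guarantees that each $u_{i,k}^*$ is an atom of $H^*$, since $\{\lambda \mid u_{i,k}^{(\lambda)} \in \mathcal{A}(H_\lambda)\} \supseteq U \in \mathcal{U}$, and the equation $u_{i,1}^* \cdots u_{i,\ell}^* = r^*$ holds because its componentwise version is valid on $U$. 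This yields $\kappa^+$ candidate factorizations of $r^*$ of length $\ell$.

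The crux is verifying that the $F_i^*$ are pairwise essentially different in $H^*$. Suppose for contradiction that for some distinct $i, j < \kappa^+$ there is a permutation $\sigma$ of $\{1,\ldots,\ell\}$ with $u_{i,k}^* \sim u_{j,\sigma(k)}^*$ for every $k$. Each of these $\ell$ associate relations in $H^*$ yields, via Lemma~\ref{2.1} applied to the witnessing unit, a set $A_k \in \mathcal{U}$ on which $u_{i,k}^{(\lambda)} \sim u_{j,\sigma(k)}^{(\lambda)}$ holds in $H_\lambda$. Then $U \cap A_1 \cap \cdots \cap A_\ell$ lies in $\mathcal{U}$ and is therefore non-empty; any $\lambda$ in it makes $F_i^{(\lambda)}$ and $F_j^{(\lambda)}$ essentially the same in $H_\lambda$ via $\sigma$, contradicting the choice of the family. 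Part (2) then follows immediately: for every cardinal $\mu < \kappa$ the hypothesis gives $\{\lambda \mid \#_{L(r_\lambda)}(\ell) \geq \mu^+\} \in \mathcal{U}$, and running the same construction with $\mu^+$ in place of $\kappa^+$ produces $\mu^+$ essentially different factorizations of $r^*$, so $\#_{L(r^*)}(\ell) > \mu$ for every such $\mu$, forcing $\#_{L(r^*)}(\ell) \geq \kappa$. The main obstacle I anticipate is the temptation to invoke a pigeonhole across the $\ell!$ permutations when matching factorizations; this turns out to be unnecessary because the definition of essentially the same in $H^*$ already supplies a single fixed $\sigma$, so one intersection of finitely many $\mathcal{U}$-large sets closes the argument.
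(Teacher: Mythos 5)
Your proof is correct and follows essentially the same route as the paper: part (1) by contraposition, lifting $\kappa^+$ pairwise essentially different component factorizations to $\kappa^+$ essentially different factorizations of $r^*$, and part (2) as a formal consequence of that construction. You actually spell out the one step the paper leaves implicit (that the lifted factorizations remain pairwise essentially different, via a finite intersection of $\mathcal{U}$-large sets witnessing the associate relations), and your closing remark about not needing a pigeonhole over the $\ell!$ permutations is accurate.
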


\begin{proof}
To show (1), note that, for $U = \{ \lambda \in \Lambda \mid \#_{L(r_\lambda)}(\ell) > \kappa \} \in \mathcal{U}$, we can construct at least $\kappa^+$ many essentially different factorizations of $r_\lambda$ of length $\ell$ for every $\lambda \in U$. These lead to at least $\kappa^+$ many essentially different factorizations of $r^*$ into $\ell$ atoms. \\
Now, for the proof of (2), assume that $\{ \lambda \in \Lambda \mid \#_{L(r_\lambda)}(\ell) > \mu \} \in \mathcal{U}$ for all cardinals $\mu < \kappa$ and assume to the contrary that $\mu := \#_{L(r^*)}(\ell) < \kappa$. Then, by assumption, we have that $\{ \lambda \in \Lambda \mid \#_{L(r_\lambda)}(\ell) > \mu \} \in \mathcal{U}$. It follows by (1) that $\mu = \#_{L(r^*)}(\ell) > \mu$ which is a contradiction.
\end{proof}

To give further generalizations of the statements in Section~\ref{section:lengths}, we have to impose set theoretical assumptions. Recall that the general continuum hypothesis, for short \textbf{GCH}, is the statement that $2^\kappa = \kappa^+$, for all infinite cardinals $\kappa$. \textbf{GCH} is independent of Zermelo-Fraenkel set theory with axiom of choice (\textbf{ZFC}). For a cardinal $\kappa$, the cofinality $\text{cof}(\kappa)$ of $\kappa$ is the least cardinal $\mu$ such that there exists an unbounded map $f: \mu \to \kappa$, that is, for all $\alpha \in \kappa$ there exists some $\beta \in \mu$ such that $f(\beta) \geq \alpha$.\\
We will use the following result from basic set theory:

\begin{remark} \label{4.2} \cite[Chapter IX, Satz 4.7(b)(ii)]{Ebbinghaus}
Assume that \textbf{GCH} holds. Let $\kappa$ be an infinite cardinal and $\mu$ be a cardinal such that $0 < \mu < \text{cof}(\kappa)$. Then $\kappa^\mu = \kappa$.
\end{remark}

\begin{proposition}\label{4.3}
Assume that \textbf{GCH} holds. Let $r \in H = \prod_{\lambda \in \Lambda} H_\lambda$, $\ell \in \mathbb{N}_{\geq 2}$ and $\kappa$ be a cardinal such that $|\Lambda| < \text{cof}(\kappa)$.
\begin{itemize}
\item[(1)] If $\{ \lambda \in \Lambda \mid \#_{L(r_\lambda)}(\ell) \leq \kappa \} \in \mathcal{U}$ then $\#_{L(r^*)}(\ell) \leq \kappa$.
\item[(2)] If $\{ \lambda \in \Lambda \mid \#_{L(r_\lambda)}(\ell) \leq \kappa \} \in \mathcal{U}$ and $\{ \lambda \in \Lambda \mid \#_{L(r_\lambda)}(\ell) > \mu \} \in \mathcal{U}$ for all cardinals $\mu < \kappa$ then $\#_{L(r^*)}(\ell) = \kappa$.
\end{itemize}
\end{proposition}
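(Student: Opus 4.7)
The strategy is to establish (1) by a cardinality-counting argument that hinges on Remark~\ref{4.2} (via \textbf{GCH}), and then to derive (2) by squeezing $\#_{L(r^*)}(\ell)$ between the upper bound from (1) and the lower bound supplied by Proposition~\ref{4.1}(2).

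For (1), I would set $U = \{\lambda \in \Lambda \mid \#_{L(r_\lambda)}(\ell) \leq \kappa\} \in \mathcal{U}$ and fix, for each $\lambda \in U$, a system $F_\lambda$ of representatives of the essentially-same equivalence classes of length-$\ell$ factorizations of $r_\lambda$, so that $|F_\lambda| \leq \kappa$. By Lemmas~\ref{2.1} and~\ref{2.2}, every length-$\ell$ factorization of $r^*$ arises from some tuple $(u_1, \ldots, u_\ell) \in H^\ell$ which, on a set $V \in \mathcal{U}$ with $V \subseteq U$, yields componentwise a length-$\ell$ factorization of $r_\lambda$ into atoms of $H_\lambda$; modifying the tuple outside $V$ does not change the class in $H^*$, so I may assume each $\lambda$-component lies in $F_\lambda$ for every $\lambda \in U$. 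This defines a map from essentially-same classes of length-$\ell$ factorizations of $r^*$ into $\prod_{\lambda \in U} F_\lambda$ modulo the ultrafilter equivalence $f \sim g :\Leftrightarrow \{\lambda \in U \mid f(\lambda) = g(\lambda)\} \in \mathcal{U}$.

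To verify that this map is well-defined and injective, the key point is that two length-$\ell$ factorizations of $r^*$ are essentially the same if and only if, on a set in $\mathcal{U}$, their components are pairwise associated through the \emph{same} permutation $\sigma \in S_\ell$. Since $|S_\ell| = \ell!$ is finite, the permutation-refinement argument used in the proof of the factoriality proposition following Corollary~\ref{2.5} extracts such a uniform $\sigma$. Hence
\[
\#_{L(r^*)}(\ell) \leq \Bigl|\prod_{\lambda \in U} F_\lambda\Bigr| \leq \kappa^{|U|} \leq \kappa^{|\Lambda|},
\]
and since $|\Lambda|$ is positive and strictly below $\text{cof}(\kappa)$, Remark~\ref{4.2} gives $\kappa^{|\Lambda|} = \kappa$, completing (1). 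Part (2) then follows at once: (1) yields $\#_{L(r^*)}(\ell) \leq \kappa$, while the second hypothesis together with Proposition~\ref{4.1}(2) yields $\#_{L(r^*)}(\ell) \geq \kappa$.

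The main obstacle I expect is the injectivity step in (1): different representatives of the same factorization of $r^*$ may legitimately project to different elements of the various $F_\lambda$, and one must show that, once the appropriate permutation is fixed, the chosen representatives must nevertheless agree $\mathcal{U}$-almost everywhere. The finiteness of $S_\ell$ is precisely what allows the ultrafilter refinement to close up; once this technicality is in place, the proposition reduces to the cardinal-arithmetic identity $\kappa^{|\Lambda|} = \kappa$ guaranteed by \textbf{GCH}.
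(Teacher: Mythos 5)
Your proposal is correct and follows essentially the same route as the paper: choose a system of at most $\kappa$ representatives of length-$\ell$ factorizations in each component, pass to the ultraproduct of these systems, observe that every length-$\ell$ factorization of $r^*$ is essentially the same as one so obtained, and bound the count by $\kappa^{|\Lambda|} = \kappa$ via Remark~\ref{4.2}, with (2) following by combining (1) with Proposition~\ref{4.1}(2). The injectivity/well-definedness concern you flag as the main obstacle is actually dispensable: for the upper bound one only needs that the set of essentially-same classes of factorizations of $r^*$ is \emph{covered} by a set of cardinality at most $\kappa$, which is exactly what the paper uses, so the uniform-permutation extraction is a harmless but unnecessary refinement here.
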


\begin{proof}
(2) follows immediately from (1) together with Proposition \ref{4.1}(2).

To show (1), let $U= \{ \lambda \in \Lambda \mid \#_{L(r_\lambda)}(\ell) \leq \kappa \} \in \mathcal{U}$ and for all $\lambda \in U$, let $\mathfrak{Z}_\lambda$ be a set of at most $\kappa$ essentially different factorizations with length $\ell$ of $r_\lambda$ such that every given factorization with length $\ell$ is essentially the same as an element in $\mathfrak{Z}_\lambda$. Moreover, for $\lambda \in \Lambda \setminus U$, let $\mathfrak{Z}_\lambda = \{1\}$. Now set
\begin{align*}
\mathfrak{Z}^* = \{ (Z_\lambda)_{\lambda \in \Lambda}^* \mid \forall \lambda \in \Lambda \ Z_\lambda \in \mathfrak{Z}_\lambda \}.
\end{align*}
We show that every factorization of $r^*$ with length $\ell$ is essentially the same as an element of $\mathfrak{Z}^*$. Let $Z^* = (Z_\lambda)^*$ be a factorization of $r^*$ with length $\ell$ and let $V \subseteq U$ such that $Z_\lambda$ is a factorization of $r_\lambda$ for every $\lambda \in V$. Then, for every $\lambda \in V$, we have that $Z_\lambda \in \mathfrak{Z}_\lambda$. We conclude that $Z^* \in \mathfrak{Z}^*$. 

To complete the proof, we note that $|\mathfrak{Z}^*| \leq \kappa^{|U|} \leq \kappa^{|\Lambda|} = \kappa$ by Remark \ref{4.2}.
\end{proof}

The next proposition completes our series of generalizations of Theorem \ref{2.4}. Here we need an additional assumption on the ultrafilter $\mathcal{U}$. Let $\kappa$ be a cardinal. The ultrafilter $\mathcal{U}$ on $\Lambda$ is called $\kappa$-\textit{complete} if for every family $(U_i)_{i \in I}$ of sets $U_i \in \mathcal{U}$ with $|I| < \kappa$ we have that $\bigcap_{i \in I} U_i \in \mathcal{U}$.

Principal ultrafilters are $\kappa$-complete for every cardinal $\kappa$. Every ultrafilter is $\aleph_0$-complete per definition. It is necessary for $\mathcal{U}$ being a $\kappa$-complete non-principal ultrafilter on $\Lambda$ that $\kappa \leq |\Lambda|$~\cite[Proposition 4.2.2]{mod}.

\begin{proposition} \label{4.4}
Let $r \in H = \prod_{\lambda \in \Lambda} H_\lambda$, $\ell \in \mathbb{N}_{\geq 2}$ and $\kappa$ be a cardinal such that $\mathcal{U}$ is $\kappa^+$-complete. If $\#_{L(r^*)}(\ell) \geq \kappa$ then $\{ \lambda \in \Lambda \mid \#_{L(r_\lambda)}(\ell) > \mu \} \in \mathcal{U}$ for all cardinals $\mu < \kappa$.
\end{proposition}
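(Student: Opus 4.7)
The plan is to lift $\kappa$ many essentially different factorizations of $r^*$ back to representatives in $H$ and then, for each $\mu<\kappa$, use $\kappa^+$-completeness to pass to a single set in $\mathcal{U}$ on which sufficiently many of these factorizations remain essentially different componentwise.

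First I would fix an indexing of the hypothetical factorizations. By hypothesis, choose an index set $I$ with $|I|=\kappa$ and for every $i\in I$ pick $u_{i,1},\ldots,u_{i,\ell}\in H$ so that $r^*=u_{i,1}^*\cdots u_{i,\ell}^*$, all $u_{i,k}^*$ are atoms, and the $\ell$-tuples are pairwise essentially different in $H^*$. For each $i\in I$, Lemma~\ref{2.2} (and equality in the ultraproduct) yields a set
\[
U_i=\{\lambda\in\Lambda\mid r_\lambda=u_{i,1}^{(\lambda)}\cdots u_{i,\ell}^{(\lambda)},\ u_{i,k}^{(\lambda)}\in\mathcal{A}(H_\lambda) \text{ for } k=1,\ldots,\ell\}\in\mathcal{U}.
\]
Next I would translate ``essentially different'' componentwise. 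Note that $a^*\sim b^*$ in $H^*$ is equivalent to $\{\lambda\mid a_\lambda\sim b_\lambda\}\in\mathcal{U}$ (one direction uses Lemma~\ref{2.1}, the other uses the axiom of choice to assemble units componentwise). Hence, for $i\neq j$ and each permutation $\sigma$ of $\{1,\ldots,\ell\}$, essential differentness of the two factorizations gives some $k$ with $u_{i,k}^*\not\sim u_{j,\sigma(k)}^*$, so $B^{i,j}_\sigma:=\{\lambda\mid\exists k:\ u_{i,k}^{(\lambda)}\not\sim u_{j,\sigma(k)}^{(\lambda)}\}\in\mathcal{U}$. Intersecting over the $\ell!$ permutations (a finite intersection) yields $B^{i,j}:=\bigcap_\sigma B^{i,j}_\sigma\in\mathcal{U}$, and on $U_i\cap U_j\cap B^{i,j}$ the factorizations $u_{i,\cdot}^{(\lambda)}$ and $u_{j,\cdot}^{(\lambda)}$ of $r_\lambda$ are essentially different.

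Now I would fix an arbitrary cardinal $\mu<\kappa$ and choose $I_0\subseteq I$ with $|I_0|=\mu^+$, which is possible since $\mu^+\leq\kappa=|I|$. Define
\[
V=\bigcap_{i\in I_0}U_i\ \cap\ \bigcap_{\substack{i,j\in I_0\\ i\neq j}}B^{i,j}.
\]
The total number of sets being intersected has cardinality at most $\mu^+$ (finite in case $\mu$ is finite), hence at most $\kappa<\kappa^+$, so by $\kappa^+$-completeness of $\mathcal{U}$ we get $V\in\mathcal{U}$. For every $\lambda\in V$, each $u_{i,\cdot}^{(\lambda)}$ with $i\in I_0$ is a factorization of $r_\lambda$ into $\ell$ atoms, and any two of them (for $i\neq j$ in $I_0$) are essentially different; thus $\#_{L(r_\lambda)}(\ell)\geq|I_0|=\mu^+>\mu$. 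Therefore $V\subseteq\{\lambda\in\Lambda\mid\#_{L(r_\lambda)}(\ell)>\mu\}\in\mathcal{U}$, which is the desired conclusion.

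The only delicate point is the cardinal arithmetic justifying the application of $\kappa^+$-completeness: one needs the number of sets in the intersection to be strictly less than $\kappa^+$, which forces the choice $|I_0|=\mu^+$ (rather than $|I_0|=\kappa$) together with the estimate $|I_0|\cdot|I_0|=\mu^+\leq\kappa$ for infinite $\mu$, and reduces to a finite intersection when $\mu$ is finite. Beyond this bookkeeping, the argument is a straightforward componentwise translation.
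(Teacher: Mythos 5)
Your argument is correct and follows essentially the same route as the paper's proof: lift the $\kappa$ essentially different factorizations to representatives, use $\kappa^+$-completeness to find a single set in $\mathcal{U}$ on which enough of them remain pairwise essentially different componentwise, and count. The only (harmless) differences are that you restrict to $\mu^+$ of the factorizations for each $\mu<\kappa$ and conclude directly, whereas the paper intersects over all $\kappa$ of them (still below $\kappa^+$) and finishes by contradiction.
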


\begin{proof}
Let $\mathfrak{Z}^*$ be a set of $\kappa$ essentially different factorizations with length $\ell$ of $r^*$. For every $Z^* = (Z_\lambda)^*_\lambda$ there exists $U_{Z^*} \in \mathcal{U}$ such that the $Z_\lambda$ are $\kappa$ essentially different factorizations with length $\ell$ of $r_\lambda$ for every $\lambda \in U_{Z^*}$. Since $\mathcal{U}$ is $\kappa^+$-complete, it follows that $U = \bigcap_{Z^* \in \mathfrak{Z}^*} U_{Z^*} \in \mathcal{U}$. Let $U' \subseteq U$ with $U' \in \mathcal{U}$ such that for all $Y^*, Z^* \in \mathfrak{Z}^*$ and for all $\lambda \in U'$ we have that $Y_\lambda$ and $Z_\lambda$ are essentially different. 

Assume to the contrary that there exists a cardinal $\mu < \kappa$ such that $V = \{ \lambda \in \Lambda \mid \#_{L(r_\lambda)}(\ell) \leq \mu \} \in \mathcal{U}$. In particular, $U' \cap V \neq \emptyset$ and we can pick $\lambda \in U' \cap V$. On the one hand, $\lambda \in U'$ and therefore $\#_{L(r_\lambda)}(\ell) \geq \kappa$. But, on the other hand, $\lambda \in V$ and therefore $ \#_{L(r_\lambda)}(\ell) \leq \mu < \kappa$, which is a contradiction.
\end{proof}

A cardinal $\kappa$ is called \textit{measurable} if there exists a non-principal $\kappa$-complete ultrafilter on $\kappa$. So if $\Lambda$ is a measureable cardinal and $\kappa < \Lambda$, then Proposition~\ref{4.4} applies.

It is consistent with \textbf{ZF} (Zermelo-Fraenkel set theory without axiom of choice) that there exists a cardinal $\kappa$ such that $\kappa^+$ is measurable and it is consistent with \textbf{ZFC} that there exists an uncountable measureable cardinal. On the other hand, the existence of an uncountable measurable cardinal cannot be proven based on \textbf{ZFC} since every uncountable measurable cardinal must be inaccessible. The cardinal $\aleph_0$ clearly is measurable. For further information on this topic, see~\cite{Jech}.

The following is a very basic and easy result on $\kappa$-complete ultrafilters whose proof is included here for the convenience of the reader.

\begin{remark} \label{4.5}
Let $\mathcal{U}$ be a non-principal $\kappa$-complete ultrafilter on $\Lambda$ for some cardinal $\kappa$. Then $\mathcal{U}$ contains no set of cardinality less than $\kappa$.
\end{remark}

\begin{proof}
Assume to the contrary that $U \in \mathcal{U}$ with $|U| < \kappa$. Since $\mathcal{U}$ is a non-principal ultrafilter and $U = \{x \} \cup (U\setminus \{x\})$ for every $x \in U$, we have $U \setminus \{ x \} \in \mathcal{U}$ for every $x \in U$. Since $\mathcal{U}$ is $\kappa$-complete, we infer that $\emptyset = \bigcap_{x \in U} U \setminus \{x\} \in \mathcal{U}$, which contradicts the definition of a filter.
\end{proof}

We close our considerations with a generalization of Theorem \ref{3.3}. Indeed, the special case $\Lambda = \aleph_0$ of the following result is exactly the mentioned theorem. Here we state it for arbitrary measurable cardinals.

A cardinal $\kappa$ is called a \textit{limit cardinal}, if $\kappa = \aleph_\delta$ for a non-zero ordinal $\delta$ that is a limit, i.e., that has no predecessor.

\begin{theorem}
Let $\Lambda$ be a measureable cardinal and $\mathcal{U}$ a non-principal $\Lambda$-complete ultrafilter on $\Lambda$.
Moreover, let $(H_\lambda)_{\lambda \in \Lambda}$ be a family of commutative cancellative monoids such that $H_\lambda$ satisfies the following property for all $\lambda \in \Lambda$:

(F$_\Lambda$) For every multiset $L \subseteq \mathbb{N}_{\geq 2}$ with $\#_L(\ell) < \Lambda$ for all $\ell \in \mathbb{N}_{\geq 2}$, there exists $r_\lambda \in H_\lambda$ such that for all $\ell \in \mathbb{N}_{\geq 2}$ we have that $\#_{L(r_\lambda)}(\ell) \geq \#_L(\ell)$ and, if $\#_L(\ell)$ is finite, then $\#_{L(r_\lambda)}(\ell) = \#_L(\ell)$.

Then, for every multiset $L \subseteq \mathbb{N}_{\geq 2}$ such that $\#_L(\ell) \leq \Lambda$ is either finite or countable or a limit cardinal for all $\ell \in \mathbb{N}_{\geq 2}$, there exists $r^*$ in the ultraproduct $H^* = \prod_{\lambda \in \Lambda}^\mathcal{U} H_\lambda$ such that for all $\ell \in \mathbb{N}_{\geq 2}$
\begin{itemize}
\item[(i)] $\#_{L(r^*)}(\ell) \geq \#_L(\ell)$ and
\item[(ii)] if $\#_L(\ell)$ is finite then $\#_{L(r^*)}(\ell) = \#_L(\ell)$.
\end{itemize}
\end{theorem}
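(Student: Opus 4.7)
The plan is to build a family $(r_\lambda)_{\lambda \in \Lambda}$ with $r_\lambda \in H_\lambda$ via the hypothesized property (F$_\Lambda$), chosen so that the multisets of lengths $L(r_\lambda)$ approximate $L$ in a sense tailored to Theorem~\ref{2.4} and Proposition~\ref{4.1}. For each $\lambda \in \Lambda$ and each $\ell \in \mathbb{N}_{\geq 2}$ I set $\#_{L_\lambda}(\ell) := \min(\#_L(\ell), |\lambda|)$. Since $\lambda$ is an ordinal strictly below the cardinal $\Lambda$ we have $|\lambda| < \Lambda$, so $\#_{L_\lambda}(\ell) < \Lambda$ for every $\ell$ and $L_\lambda$ is an admissible input for (F$_\Lambda$). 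That property thus produces $r_\lambda \in H_\lambda$ with $\#_{L(r_\lambda)}(\ell) \geq \#_{L_\lambda}(\ell)$, with equality whenever the right-hand side is finite; I then put $r^* := (r_\lambda)^*$.

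For conclusion (ii), suppose $\#_L(\ell) = n \in \mathbb{N}$. The set $\{\lambda \in \Lambda : |\lambda| \geq n\}$ is cofinite in $\Lambda$ and hence lies in $\mathcal{U}$, since non-principality forbids finite sets. On this set $\#_{L_\lambda}(\ell) = n$, and the equality clause of (F$_\Lambda$) forces $\#_{L(r_\lambda)}(\ell) = n$; Theorem~\ref{2.4}(1) then gives $\#_{L(r^*)}(\ell) = n$. For conclusion (i) with $\#_L(\ell)$ infinite, I invoke Proposition~\ref{4.1}(2): it suffices to show that $\{\lambda \in \Lambda : \#_{L(r_\lambda)}(\ell) > \mu\} \in \mathcal{U}$ for every cardinal $\mu < \#_L(\ell)$. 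Since $\#_{L(r_\lambda)}(\ell) \geq \min(\#_L(\ell), |\lambda|)$, this set contains $\{\lambda \in \Lambda : \lambda \geq \mu^+\}$, whose complement in $\Lambda$ has cardinality $\mu^+$.

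The crux of the argument---and the point where both the shape hypothesis on $\#_L(\ell)$ and the measurability of $\Lambda$ are actually used---is the verification that $\mu^+ < \Lambda$, for then Remark~\ref{4.5} applied to the $\Lambda$-complete non-principal ultrafilter $\mathcal{U}$ puts the complement outside of $\mathcal{U}$ and the set itself into $\mathcal{U}$. When $\#_L(\ell)$ is finite or equals $\aleph_0$, the cardinal $\mu$ is finite and $\mu^+ < \Lambda$ is trivial. When $\#_L(\ell)$ is an uncountable limit cardinal strictly below $\Lambda$, we obtain $\mu^+ \leq \#_L(\ell) < \Lambda$ directly from the definition of limit cardinal. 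The remaining case $\#_L(\ell) = \Lambda$ forces us to use that an uncountable measurable cardinal is inaccessible (hence a strong limit), so that $\mu < \Lambda$ implies $\mu^+ < \Lambda$. Had $\#_L(\ell)$ been allowed to be an uncountable successor cardinal $\kappa^+$, the choice $\mu = \kappa$ would yield $\mu^+ = \#_L(\ell)$ and the argument would break down; this explains the hypothesis that excludes such multiplicities.
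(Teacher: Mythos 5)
Your proof is correct, but it takes a genuinely different route from the paper's. The paper splits into the cases $|L|<\Lambda$ and $|L|=\Lambda$, and in the latter enumerates $L$ by a bijection $f\colon\Lambda\to L$ and feeds the cumulative initial segments $L_\lambda=\{f(\alpha)\mid\alpha\leq\lambda\}$ into (F$_\Lambda$); the infinite-multiplicity case is then settled by a contradiction argument that analyses the order structure of the ``bad'' set $\{\mu\mid \#_{L_\mu}(\ell)\leq\lambda\}$, shows it would have to be all of $\Lambda$, and concludes that $\#_L(\ell)$ would be a successor cardinal --- which is exactly where the ``finite, countable or limit'' hypothesis is invoked. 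Your truncation $\#_{L_\lambda}(\ell)=\min(\#_L(\ell),|\lambda|)$ avoids the enumeration and the case split entirely and reduces everything to the single inequality $\mu^+<\Lambda$, checked against Remark~\ref{4.5}; this is cleaner and, I would say, more transparent about what the measurability of $\Lambda$ is doing. One remark on your closing sentence, though: your own argument does \emph{not} break down for an uncountable successor cardinal $\#_L(\ell)=\kappa^+$ with $\kappa^+<\Lambda$, since then $\mu<\kappa^+$ gives $\mu^+\leq\kappa^+<\Lambda$ and the complement $\{\lambda\mid\lambda<\mu^+\}$ still has cardinality below $\Lambda$. The only genuinely dangerous case would be $\#_L(\ell)=\Lambda$ with $\Lambda$ a successor, which is excluded automatically because an uncountable measurable cardinal is inaccessible. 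So your proof in fact establishes the conclusion for arbitrary multiplicities $\leq\Lambda$, i.e.\ a slightly stronger theorem than stated; the shape hypothesis on $\#_L(\ell)$ is an artifact of the paper's enumeration-based argument rather than of yours. This does not affect the correctness of your proof of the stated result, but you should correct the diagnostic remark.
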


\begin{proof}
Let $L \subseteq \mathbb{N}_{\geq 2}$ be a multiset with the property that $\#_{L}(\ell) \leq \Lambda$ is either finite or countable or a limit cardinal for all $\ell \in \mathbb{N}_{\geq 2}$. Since $L$ is a countable union of multisets containing one single element and this element has multiplicity at most $\Lambda$, we have $|L| \leq \Lambda$.

\textbf{Case 1:} $|L| < \Lambda$. Then, in particular, $\#_L(\ell) < \Lambda$ for all $\ell \in \mathbb{N}_{\geq 2}$. The assertion follows immediately by the assumption (F$_\Lambda$) together with Proposition \ref{4.1}(2) and Theorem~\ref{2.4}.

\textbf{Case 2:} $|L| = \Lambda$. Let $f : \Lambda \to L$ be a bijective map, in the sense that every element $\ell \in L$ is the image of exactly $\#_L(\ell)$ elements of $\Lambda$. Define
\begin{align*}
L_\lambda = \{f(\alpha) \mid \alpha \leq \lambda \}
\end{align*}
for all $\lambda \in \Lambda$. Clearly, $\#_{L_\lambda}(\ell) \leq |L_\lambda| = |\lambda| < \Lambda$ for all $\lambda \in \Lambda$ and $\ell \in \mathbb{N}_{\geq 2}$. By assumption, we can pick $r_\lambda \in D_\lambda$ such that $L(r_\lambda) = L_\lambda$. We claim that $r^* = (r_\lambda)^*_\lambda \in H^*$ has the aimed property. To show this, let $\ell \in \mathbb{N}_{\geq 2}$.

If $\#_L(\ell)$ is finite, then, since $f$ is bijective, there exists $\lambda \in \Lambda$ such that for all $\alpha \in \Lambda$ with $\alpha \geq \lambda$ we have $\#_{L_\alpha}(\ell) = \#_L(\ell)$. Let $\lambda \in \Lambda$ be minimal with this property, that is, $\#_{L_\mu}(\ell) < \#_L(\ell)$ for all $\mu \in \lambda$. Since $\mathcal{U}$ is $\Lambda$-complete and $|\lambda| < \Lambda$, it follows by Remark~\ref{4.5} that $\lambda = \{ \mu \in \Lambda \mid \#_{L_\mu}(\ell) < \#_L(\ell) \} \notin \mathcal{U}$. Therefore $\{ \mu \in \Lambda \mid \#_{L_\mu}(\ell) = \#_L(\ell) \} \in \mathcal{U}$. We infer by Theorem \ref{2.4} that $\#_{L(r^*)}(\ell) = \#_L(\ell)$.

Now, if $\#_L(\ell)$ is infinite, by Proposition \ref{4.1}(2) it suffices to show that for all $\lambda < \#_L(\ell)$ we have $\{ \mu \in \Lambda \mid \#_{L_\mu}(\ell) > \lambda \} \in \mathcal{U}$. Assume to the contrary that there exists a cardinal $\lambda < \#_L(\ell)$ such that $U = \{ \mu \in \Lambda \mid \#_{L_\mu}(\ell) \leq \lambda \} \in \mathcal{U}$. Since $\mathcal{U}$ is $\Lambda$-complete, it follows that $|U| = \Lambda$. For all $\mu_1, \mu_2 \in \Lambda$ with $\mu_1 \leq \mu_2$ we have that $L_{\mu_1} \subseteq L_{\mu_2}$ and therefore $\#_{L_{\mu_2}}(\ell) \leq \lambda$ implies $\#_{L_{\mu_1}}(\ell) \leq \lambda$, so $\mu_2 \in U$ implies $\mu_1 \in U$. If $U$ contains a maximal element $\mu$ then $U = \mu + 1$ is an ordinal. Otherwise, $U = \bigcup_{\mu \in U} \mu$ as a union of ordinals is also an ordinal. Since $|U| = \Lambda$, it follows that $U = \Lambda$.

Set $P = \{ \mu \in \Lambda \mid f(\mu) = \ell \}$ and $P_\mu = P \cap \mu$ for all $\mu \in \Lambda$. Then $|P| = \#_L(\ell)$ and $|P_\mu| = \#_{L(r_\mu)}(\ell) \leq \lambda < \#_L(\ell)$ for all $\mu \in \Lambda = U$. Since $P_\mu \subseteq P_{\mu +1} \subseteq P_\mu \cup \{\mu\}$ for all $\mu \in \Lambda$, in view of cardinalities, we might as well assume that the $P_\mu$ are indexed over $\#_L(\ell)$, that is, we have $(P_\mu)_{\mu \in \#_L(\ell)}$ with $P_{\mu +1} = P_\mu \cup \{\mu\}$. Then, since $|P_\mu| \leq \lambda < \#_L(\ell)$, $|P| = \#_L(\ell)$ and $P = \bigcup_{\mu \in \#_L(\ell)} P_\mu$, it follows that $\#_L(\ell) = \lambda^+$ is a successor cardinal. This is a contradiction to the assumption that $\#_L(\ell)$ either is countable or a limit cardinal.
\end{proof}

\section{A new proof of a theorem related to additive combinatorics}\label{section:application}

In order to illustrate how our results can be applied in somewhat unexpected areas of mathematics and to propose a general recipe for such applications, we want to give a new proof, using ultraproducts, for a theorem of Geroldinger, W.A. Schmid and Q. Zhong from 2017. It deals with a concept present in both, additive combinatorics and the theory of non-unique factorizations, namely, with the monoid of zero-sum sequences.

Let $(G,+,0)$ be an Abelian group. We denote by $\mathcal{F}(G)$ the free Abelian monoid on the basis~$G$. The elements of $\mathcal{F}(G)$ are finite unordered sequences of elements of $G$, where repetition is allowed. We denote such sequences by $U = g_1 \bdot g_2 \bdot \cdots \bdot g_n$, where $g_i \in G$, and set $|U| = n$ the \textit{length} of $U$. The monoid operation on $\mathcal{F}(G)$ is concatenation of sequences and the neutral element is the empty sequence.

There is a homomorphism $\sigma: \mathcal{F}(G) \to G$ given by $\sigma(g_1 \bdot g_2 \bdot \cdots \bdot g_n) = g_1 + g_2 + \cdots + g_n$. The submonoid $\mathcal B(G) := \sigma^{-1}(\{0\})$ of $\mathcal{F}(G)$ is called the \textit{monoid of zero-sum sequences} over $G$. It plays a prominent role in the study of non-unique factorizations in Krull monoids and, in particular, in normal Noetherian integral domains, see~\cite[Chapters 3, 5, 6, 7]{GHK}.

\begin{theorem}\cite[Theorem 3.7]{application}\label{theorem:application}
Let $L$ be a finite non-empty set of integers $\geq 2$. Then there are only finitely many finite Abelian groups $G$ (up to isomorphism) such that $L$ is not the set of lengths of an element in $\mathcal{B}(G)$.
\end{theorem}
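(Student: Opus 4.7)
The plan is to argue by contradiction via an ultraproduct construction. Assume that $L$ is a finite nonempty subset of $\mathbb{N}_{\geq 2}$ and that there exists an infinite sequence of pairwise non-isomorphic finite Abelian groups $(G_i)_{i \in \mathbb{N}}$ such that $L$ is not a set of lengths in $\mathcal{B}(G_i)$ for any $i$. Since there are only finitely many isomorphism types of finite Abelian groups of each fixed order, we may arrange $|G_i| \to \infty$. Fix a non-principal ultrafilter $\mathcal{U}$ on $\mathbb{N}$, and form $G^* = \prod_{i \in \mathbb{N}}^\mathcal{U} G_i$ together with $\mathcal{B}^* = \prod_{i \in \mathbb{N}}^\mathcal{U} \mathcal{B}(G_i)$. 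Since $\{i \mid |G_i| > n\}$ is cofinite for every $n$, the Abelian group $G^*$ is infinite, so by Kainrath's theorem (see Example~\ref{3.4}(1)) every finite subset of $\mathbb{N}_{\geq 2}$ is realized as a set of lengths in $\mathcal{B}(G^*)$. Pick $W = g_1^* \bdot \cdots \bdot g_n^* \in \mathcal{B}(G^*)$ with $L_{\mathcal{B}(G^*)}(W) = L$.

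Writing $g_j^* = (g_j^{(i)})^*$ and setting $V_i = g_1^{(i)} \bdot \cdots \bdot g_n^{(i)} \in \mathcal{F}(G_i)$, we have $V_i \in \mathcal{B}(G_i)$ for all $i$ in some $U_0 \in \mathcal{U}$, since $\sigma(W) = 0$ in $G^*$; for $i \notin U_0$ replace $V_i$ by the empty sequence and set $V = (V_i)^* \in \mathcal{B}^*$. The heart of the argument is the identity $L_{\mathcal{B}^*}(V) = L_{\mathcal{B}(G^*)}(W) = L$. For the inclusion $L \subseteq L_{\mathcal{B}^*}(V)$, any factorization $W = U_1 \cdots U_k$ in $\mathcal{B}(G^*)$ pushes forward componentwise to a factorization $V = U_1' \cdots U_k'$ in $\mathcal{B}^*$, and each $U_j'$ is an atom of $\mathcal{B}^*$ by Lemma~\ref{2.2}: if $U_{j,i}$ admitted a nontrivial splitting into two zero-sum subsequences for $i$ in a set of $\mathcal{U}$, then since only finitely many combinatorial splittings of a length-$|U_j|$ sequence exist, the ultrafilter would pick one splitting valid on a set in $\mathcal{U}$, and this lifts to a proper factorization of $U_j$ in $\mathcal{B}(G^*)$, contradicting atomicity. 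Conversely, given a factorization $V = V_1 \cdots V_k$ in $\mathcal{B}^*$, one has $\sum_j |V_{j,i}| = n$ on a set in $\mathcal{U}$; pigeonholing on the finitely many composition types and, for a fixed profile, on the finitely many labeled set-partitions of the positions of $V_i$ produces a single combinatorial pattern valid on a set in $\mathcal{U}$, which lifts to a factorization of $W$ in $\mathcal{B}(G^*)$ of the same length.

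To conclude, note that $|V_i| = n$ on a set in $\mathcal{U}$ forces $L(V_i) \subseteq \{2, \ldots, n\}$ there, because $V_i$ has factorizations of lengths in $L \subseteq \mathbb{N}_{\geq 2}$ and is hence neither a unit nor an atom. By Corollary~\ref{2.5}(1), the set $\{i \mid \ell \in L(V_i)\}$ lies in $\mathcal{U}$ precisely when $\ell \in L_{\mathcal{B}^*}(V) = L$; intersecting $U_0$ with the finitely many sets corresponding to $\ell \in L$ and the complements corresponding to $\ell \in \{2, \ldots, n\} \setminus L$ yields a set in $\mathcal{U}$, in particular nonempty, on which $L(V_i) = L$ in $\mathcal{B}(G_i)$. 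This contradicts the choice of the $G_i$ and proves the theorem. The main obstacle I expect is the precise matching of factorization structures between $\mathcal{B}(G^*)$ and $\mathcal{B}^*$ (atoms mapping to atoms in both directions), which is delivered by the two pigeonhole arguments above; these are available only because $n = |W|$ is a standard natural number, so that the number of combinatorial splittings of the underlying sequence remains finite.
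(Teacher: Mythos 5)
Your proof is correct and follows essentially the same route as the paper: assume infinitely many exceptional groups, pass to an ultraproduct $G^*$ which is infinite, invoke Kainrath's theorem to realize $L$ in $\mathcal{B}(G^*)$, and transfer that realization down to the components to reach a contradiction. The only difference is presentational: your explicit pigeonhole arguments on the finitely many splittings of a length-$n$ sequence (valid precisely because $n=|W|$ is a standard integer) prove by hand the identity $L_{\mathcal{B}^*}(V)=L_{\mathcal{B}(G^*)}(W)$ and the descent $L(V_i)=L$, which the paper packages as the isomorphism $\mathcal{B}(G^*)\cong H^\flat$ of Example~\ref{example:B(G)}, the divisor-closedness of the protoproduct, and Lemma~\ref{lemma:toUP}.
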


Before we do this, let us describe an idea for ultraproduct constructions in a setting more general than just monoids of zero-sum sequences, including, for instance, many suitably graded rings like polynomial rings and rings of integer-valued polynomials~\cite{Cahen-Chabert}.
We consider commutative cancellative monoids $H$ with a submultiplicative map $\deg: H \to \N_0 \cup \{\infty\}$ in the sense that $\deg(ab) \leq \deg(a) + \deg(b)$. We use the convention $n+ \infty = \infty + n = \infty$ for each $n \in \N_0 \cup \{\infty \}$ and call $\deg$ a \textit{degree} on $H$. If $\deg(ab) = \deg(a) + \deg(b)$ for all $a,b \in H$, we say that $\deg$ is \textit{multiplicative}. Examples of such functions are given by the classical degree on the multiplicative monoid of a polynomial ring over an integral domain and by the map $| \ . \ |: \mathcal{B}(G) \to \N_0$.

A submonoid $D \subseteq H$ is called \textit{divisor-closed} if $d = ab$ implies $a \in D$ for all $d \in D$ and $a,b \in H$. 

\begin{remark}\label{remark:divisor-closed}
It is easy to see that, if $D$ is a divisor-closed submonoid of $H$ and $d \in D$, then $d$ is an atom of $D$ if and only if $d$ is an atom of $H$. Moreover, $L_H(d) = L_D(d)$ and, in particular, every set of lengths of an element in $D$ is a set of lengths of an element in $H$.
\end{remark}

\begin{lemma}\label{lemma:deg}
Let $H$ be a commutative cancellative monoid and $\deg: H \to \N_0 \cup \{\infty\}$ a degree on $H$. Then $\deg^{-1}(\N_0)$ is a submonoid of $H$. If $\deg$ is multiplicative then this submonoid is divisor closed.
\end{lemma}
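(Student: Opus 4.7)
The plan is to verify both claims by unwinding the definitions; there is essentially no hidden content here, only bookkeeping about the value $\infty$.

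First I would check that $S := \deg^{-1}(\mathbb{N}_0)$ contains the identity. The submultiplicative inequality applied to $1 = 1 \cdot 1$ gives $\deg(1) \leq 2 \deg(1)$, which by itself is inconclusive, so I would rely on the standing convention (reasonable for any object called a ``degree'' and automatic in the multiplicative case, where $\deg(1) = 2\deg(1)$ forces $\deg(1) = 0$ as soon as it is finite) that $\deg(1) \in \mathbb{N}_0$. Closure under the monoid operation is then immediate: for $a, b \in S$ we have $\deg(a), \deg(b) \in \mathbb{N}_0$, so submultiplicativity yields $\deg(ab) \leq \deg(a) + \deg(b) < \infty$, whence $ab \in S$.

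For the second claim, I would suppose $\deg$ is multiplicative and let $d \in S$ with a factorization $d = ab$ in $H$; the task is to show $a \in S$ (and, symmetrically, $b \in S$). Multiplicativity gives the equality $\deg(d) = \deg(a) + \deg(b)$ in $\mathbb{N}_0 \cup \{\infty\}$. Under the stated convention that $n + \infty = \infty + n = \infty$, the finiteness of $\deg(d)$ rules out $\deg(a) = \infty$ or $\deg(b) = \infty$. Therefore $\deg(a), \deg(b) \in \mathbb{N}_0$, i.e., both factors lie in $S$, showing that $S$ is divisor-closed.

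The main (and only) obstacle is the treatment of $\deg(1)$: submultiplicativity alone does not force $\deg(1) < \infty$, so one either invokes the convention built into the term ``degree'' or restricts to the multiplicative case where the identity $\deg(1) = 2\deg(1)$ pins it down. Everything else is a direct application of the inequality (or equality) defining the degree.
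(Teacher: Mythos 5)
Your proof is correct and is exactly the ``elementary computation based on all relevant definitions'' that the paper leaves to the reader: closure under products from submultiplicativity, and divisor-closedness from $\deg(d)=\deg(a)+\deg(b)$ together with the convention $n+\infty=\infty$. Your remark that $\deg(1)<\infty$ is not forced by submultiplicativity (nor even by multiplicativity, which only yields $\deg(1)=0$ once finiteness is known) is a fair observation about an implicit convention the paper does not spell out, but it does not affect the substance of the argument.
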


\begin{proof}
This is an elementary computation based on all relevant definitions.
\end{proof}

\begin{example}\label{example:protoproduct}
Let $(H_\lambda)_{\lambda \in \Lambda}$ be a family of commutative cancellative monoids and, for each $\lambda$, let $\deg_\lambda$ be a multiplicative degree on $H_\lambda$. Let $\mathcal{U}$ be an ultrafilter on $\Lambda$.

For $h^* \in H^* = \prod_\lambda^\mathcal{U} H_\lambda$, we define $\deg(h^*)= n$ if $\{\lambda \mid \deg(h_\lambda) = n\} \in \mathcal{U}$ and $\deg(h^*)= \infty$ else. $\deg$ defines a multiplicative degree on $H^*$ and we call $H^\flat = \deg^{-1}(\N_0)$ the \textit{protoproduct} of the $H_\lambda$ (with respect to $\deg$), cf.~\cite[Chapter 12]{Schoutens}. It is a divisor-closed submonoid of $H^*$ by Lemma~\ref{lemma:deg}.
\end{example}

\begin{lemma}\label{lemma:toUP}
Let $L \subseteq \mathbb{N}_{\geq 2}$. In the notation of Example~\ref{example:protoproduct}, if $\{\lambda \in \Lambda \mid L \notin \mathcal{L}(H_\lambda) \} \in \mathcal{U}$ then $L \notin \mathcal{L}(H^\flat)$.
\end{lemma}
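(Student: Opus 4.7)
The plan is to prove the contrapositive: assuming $L \in \mathcal{L}(H^\flat)$, we show that $W := \{\lambda \in \Lambda \mid L \in \mathcal{L}(H_\lambda)\} \in \mathcal{U}$. Pick $r^* \in H^\flat$ realising $L_{H^\flat}(r^*) = L$ and fix a representative $r = (r_\lambda)_\lambda$. By Lemma~\ref{lemma:deg}, $H^\flat$ is a divisor-closed submonoid of $H^*$, so Remark~\ref{remark:divisor-closed} gives $L_{H^*}(r^*) = L_{H^\flat}(r^*) = L$. This is the step that lets us apply the results of Section~\ref{section:lengths} to $r^*$ inside the ambient ultraproduct $H^*$.

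The next key point is that the finite degree of $r^*$ forces $L$ itself to be finite. Setting $n = \deg(r^*) \in \N_0$, we have $U_0 := \{\lambda \in \Lambda \mid \deg_\lambda(r_\lambda) = n\} \in \mathcal{U}$. Multiplicativity of $\deg_\lambda$, combined with the fact that non-units in $H_\lambda$ have positive degree (which holds in all the situations of interest, in particular for $|\cdot|$ on $\mathcal{B}(G)$), ensures that every factorization of $r_\lambda$ into atoms has length at most $n$ whenever $\lambda \in U_0$. Applying Corollary~\ref{2.5}(1) to each $\ell \in L$ and intersecting with $U_0$ then gives $L \subseteq \{2, \ldots, n\}$, so $L$ is finite.

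The finiteness of $L$ enables a finite-intersection argument. For each $\ell \in L$, Corollary~\ref{2.5}(1) yields $U_\ell := \{\lambda \mid \ell \in L(r_\lambda)\} \in \mathcal{U}$, and for each $\ell \in \{0, 1, \ldots, n\} \setminus L$, the same corollary together with the ultrafilter property gives $V_\ell := \{\lambda \mid \ell \notin L(r_\lambda)\} \in \mathcal{U}$. Their finite intersection
\[
W_0 \; := \; U_0 \cap \bigcap_{\ell \in L} U_\ell \cap \bigcap_{\ell \in \{0, 1, \ldots, n\} \setminus L} V_\ell
\]
lies in $\mathcal{U}$, and every $\lambda \in W_0$ satisfies $L(r_\lambda) = L$ by construction, hence $L \in \mathcal{L}(H_\lambda)$. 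Thus $W_0 \subseteq W$ and $W \in \mathcal{U}$, as desired. The main obstacle is the length-bound step in the previous paragraph, which rests on non-units having positive degree; without such a hypothesis $L$ could a priori be infinite and the finite-intersection argument would collapse into an uncountable intersection outside the reach of the ultrafilter axioms.
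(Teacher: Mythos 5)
Your proof is correct and follows essentially the same route as the paper's: argue by contraposition, transfer each length $\ell$ between $r^*$ and the components via Corollary~\ref{2.5}(1), and use the finite degree of $r^*\in H^\flat$ (together with atoms having positive degree) to bound all factorization lengths by $n=\deg(r^*)$ so that only finitely many ultrafilter sets need to be intersected. The only cosmetic difference is that you intersect the complements $V_\ell$ for the finitely many $\ell\le n$ outside $L$ directly, where the paper instead pigeonholes over the finitely many possible ``extra'' lengths $n_\lambda\in L(U_\lambda)\setminus L$; your explicit remark that non-units must have positive degree makes precise an assumption the paper's proof also uses implicitly.
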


\begin{proof}
We prove the logical contraposition of the statement. Let $U^* \in H^\flat$ with $L = L(U^*)$. Now for $n \in L$ there exist atoms $U_1^*, \hdots, U_n^* \in H^\flat$ such that $U^* = U_1^* \cdots U_n^*$. So there exists $V \in \mathcal{U}$ such that for all $\lambda \in V$ we have that $U_\lambda = U_1^{(\lambda)} \cdots U_n^{(\lambda)}$ and $U_1^{(\lambda)}, \hdots , U_n^{(\lambda)} \in \mathcal{A}(\mathcal B(G_\lambda))$ by Lemma~\ref{lemma:deg}, Remark~\ref{remark:divisor-closed}, Example~\ref{example:protoproduct} and Lemma~\ref{2.2}. Therefore $\{\lambda \in \Lambda \mid n \in L(U_\lambda) \} \in \mathcal{U}$. By intersection, we have that $\{\lambda \in \Lambda \mid L \subseteq L(U_\lambda) \} \in \mathcal{U}$. \\
Now either $\{\lambda \in \Lambda \mid L = L(U_\lambda) \} \in \mathcal{U}$ or $\{\lambda \in \Lambda \mid L \subsetneqq L(U_\lambda) \} \in \mathcal{U}$. Assume to the contrary that $V := \{\lambda \in \Lambda \mid L \subsetneqq L(U_\lambda) \} \in \mathcal{U}$. Then for every $\lambda \in V$ there exists $n_\lambda \in L(U_\lambda) \setminus L$. Then $\{n_\lambda \mid \lambda \in V\} = \{n_1,\ldots ,n_k\}$ is finite, since for some $N \in \mathbb{N}$ we have that $\deg(U_\lambda) \leq N$ for all $\lambda \in V$, because $U^* \in H^\flat$. Therefore we obtain a decomposition of $V$, namely $V$ is the union of $V_j := \{ \lambda \in V \mid n_j \in L(U_\lambda)\}$ for $j \in \{1,\ldots,k\}$. We conclude that there exists some $j \in \{1,\ldots,k\}$ such that $V_j \in \mathcal{U}$. So for every $\lambda \in V_j$, there exists a factorization of $U_\lambda$ into $n_j$ atoms, contradicting the fact that $n_j \notin L$.
\end{proof}

\begin{example}\label{example:B(G)}
Let $(G_\lambda)_{\lambda \in \Lambda}$ be a family of Abelian groups, $G^* = \prod_\lambda^\mathcal{U} G_\lambda$ and $H_\lambda = \mathcal{B}(G_\lambda)$. On $H_\lambda$ we use as a degree the length function $| \ . \ |$. It is multiplicative. The map
\begin{align*}
\mathcal{B}(G^*) &\to H^\flat \\
g_1^* \bdot \cdots \bdot g_n^* &\mapsto (g_1^{(\lambda)} \bdot \cdots \bdot g_n^{(\lambda)})_{\lambda \in \Lambda}
\end{align*}
is a monoid isomorphism.
\end{example}

\begin{proof}[Proof of Theorem~\ref{theorem:application}]
Assume to the contrary that there are infinitely many pairwise non-isomorphic finite Abelian groups $G$ such that $L$ is not the set of lengths of an element in $\mathcal{B}(G)$. In particular, for each positive integer $n$, we find a group $G_n$ of cardinality at least $n$ such that $L$ is not the set of lengths of an element in $\mathcal{B}(G_n)$. Let $\mathcal{U}$ be a non-trivial ultrafilter on the set of all positive integers, $H_n = \mathcal B(G_n)$, $G^* = \prod^\mathcal{U} G_n$, $H^* = \prod^\mathcal{U} H_n$ and let $H^\flat$ the protoproduct as in Examples~\ref{example:protoproduct} and~\ref{example:B(G)}.

As the cardinalities of the $G_n$ are unbounded, $G^*$ is an infinite Abelian group and, hence, by the Theorem of Kainrath~\cite{Kainrath}, $L$ is the set of lengths of an element in $\mathcal{B}(G^*) \cong H^\flat$, where the isomorphism comes from Example~\ref{example:B(G)}. The same is true for $H^*$ which contains $H^\flat$ as a divisor closed submonoid by Example~\ref{example:protoproduct}. This is a contradiction to Lemma~\ref{lemma:toUP}. 
\end{proof}

\begin{remark}\label{remark:examplesproto}
We want to note that the method developed in this section could have various applications to the study of factorizations in monoids and integral domains. For many natural objects with a degree, the protoproduct of a family of such objects is again an object of this type. We want to give three examples. Let $(D_\lambda)_\lambda$ be a family of integral domains.
\begin{enumerate}
\item The protoproduct of the family of polynomial rings $D_\lambda[X]$ (with respect to the usual degree), where $X$ is an indeterminate over all the $D_\lambda$, is isomorphic to the polynomial ring over the ultraproduct $D^*$ of the $D_\lambda$.

\item In an analogous way, one can define a degree on a semigroup ring $R[S]$. For $f \in D[S]$, let $\deg(f)$ be the cardinality of the support of $f$, that is, the number of non-zero coefficients of $f$. The protoproduct of a family of semigroup rings $R_\lambda[S_\lambda]$ with respect to this degree is just the semigroup ring of the corresponding ultraproducts of the $R_\lambda$ and the $S_\lambda$.

\item For an integral domain $D$ with quotient field $K$, we call $\text{Int}(D) = \{f \in K[X] \mid f(D) \subseteq D\}$ the \textit{ring of integer-valued polynomials} on $D$, see~\cite{Cahen-Chabert}. The protoproduct of the rings $\text{Int}(D_\lambda)$ (with respect to the usual degree) is isomorphic to the ring $\text{Int}(D^*)$.
\end{enumerate}

The ring $\text{Int}(D)$ is a well-studied object with respect to factorizations when $D$ is a Dedekind domain (or at least a Krull domain). But, also in this context, there are many open questions. In view of the applications in this section, it seems sensible to initiate the study of factorizations in rings of integer-valued polynomials on ultraproducts of Dedekind domains. Such ultraproducts were just recently studied with respect to their prime ideal structure~\cite{primes}.
\end{remark}

\section{Protoproducts of Krull monoids}

Krull monoids are prominent objects in the study of non-unique factorizations. The reason is that their system of sets of lengths is completely determined by their divisor class group and the distribution of prime divisors in the classes, see~\cite[Section 3.4]{GHK}. However, Remark~\ref{2.3}(2) shows that an ultraproduct of Krull monoids is not necessarily Krull (actually it is not in every non-trivial case). But there is no reason for despair: An ultraproduct of Krull monoids always contains a large divisor closed submonoid that is Krull. If, for instance, all the Krull monoids we want to consider are actually Krull domains, we can even determine the class group and the distribution of prime divisors.
To see this, we need a sensible degree function on a Krull monoid. 

\begin{definition}
Let $H$ be a Krull monoid and let $\phi: H \to \mathcal{F}(P)$ be a divisor theory for $H$, see~\cite[Section 2.4]{GHK}. For $a \in H$, let $\phi(a) = p_1\cdots p_n$ with $p_i \in P$. We set $\deg(a) = n$ and call $\deg: H \to \N_0$ the \textit{canonical degree} on $H$.
\end{definition}

Since $\phi$ is a homomorphism, $\deg$ is indeed a multiplicative degree on $H$. Moreover, as a divisor theory is unique up to isomorphism, $\deg$ is well-defined. It is a standard result that a monoid $\mathcal{B}(G)$ of zero-sum sequences is always Krull. Note that the canonical degree on $\mathcal{B}(G)$ is exactly the length function as introduced in the preceding section.

Let $\phi: H \to \mathcal{F}(P)$ be a divisor theory for the Krull monoid $H$. $H$ satisfies the \textit{approximation property} if, for all $p_1,\ldots,p_n \in P$ and non-negative integers $e_1,\ldots,e_n$, there exists $a \in H$ such that $p_i^{e_i}$ is the highest power of $p_i$ dividing $\phi(a)$ in $\mathcal{F}(P)$ for every $i$. See the book on ideal systems by Halter-Koch \cite[Chapter 25 \& 26.4]{idealsystems} for more on approximation properties of Krull monoids.

\begin{remark}\label{remark:approx}
\begin{enumerate}
\item Krull domains always satisfy the approximation property. This is the classical Approximation Theorem for Krull domains, see~\cite{Gilmer}.
\item  Moreover, if $H$ has the approximation property, it is easy to see that, for every $p \in P$, there exist $a,b \in H$ such that $p = \gcd(\phi(a),\phi(b))$.
\end{enumerate}
\end{remark}

\begin{theorem}\label{theorem:Krull}
A protoproduct $H^\flat$ of a family $(H_\lambda)_{\lambda\in \Lambda}$ of Krull monoids with respect to their canonical degrees is a Krull monoid.

Moreover, if all the $H_\lambda$ satisfy the approximation property then the well-defined map
\begin{align*}
\phi: H^\flat &\to \mathcal{F}(P^*) \\
        a^* &\mapsto (\phi_\lambda(a_\lambda))^*_\lambda
\end{align*}
is a divisor theory for $H^\flat$, where $\phi_\lambda: H_\lambda \to \mathcal{F}(P_\lambda)$ is a divisor theory for $H_\lambda$ and $\mathcal{F}(P^*)$, the free Abelian monoid over the ultraproduct $P^*$ of the $P_\lambda$, is isomorphically identified with the corresponding protoproduct of the $\mathcal{F}(P_\lambda)$.

In particular, the divisor class group of $H^\flat$ is the corresponding ultraproduct of the divisor class groups $\mathcal{C}_v(H_\lambda)$ of the $H_\lambda$ and the set containing prime divisors is the corresponding ultraproduct of the subsets of the $\mathcal{C}_v(H_\lambda)$ containing prime divisors.
\end{theorem}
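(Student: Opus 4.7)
My plan is to first establish that $\phi$ is a well-defined divisor homomorphism into a free abelian monoid (yielding the Krull property), then upgrade it to a divisor theory using the approximation property, and finally read off the divisor class group.

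I would begin by identifying $\mathcal{F}(P^*)$ with the protoproduct of the $\mathcal{F}(P_\lambda)$'s (taken with respect to the length degree) via the natural assignment $p_1^* \cdots p_n^* \mapsto (p_1^{(\lambda)} \cdots p_n^{(\lambda)})^*$; well-definedness and injectivity rest on the fact that two words in a free abelian monoid agree iff they differ by a permutation, together with the standard finite-$S_n$ pigeon-hole employed earlier in the proof that $H^*_A$ is factorial (the Proposition following Corollary~\ref{2.5}), while surjectivity is immediate from bounded degree. With this identification, $\phi$ is a monoid homomorphism. To check it is a divisor homomorphism, suppose $\phi(b^*) = \phi(a^*) c^*$ in $\mathcal{F}(P^*)$; pointwise on a set in $\mathcal{U}$ one then has $\phi_\lambda(a_\lambda) \mid \phi_\lambda(b_\lambda)$ and so $b_\lambda = a_\lambda d_\lambda$ for some $d_\lambda \in H_\lambda$ by the divisor homomorphism property of $\phi_\lambda$, while cancellation in $\mathcal{F}(P_\lambda)$ forces $\phi_\lambda(d_\lambda) = c_\lambda$; hence $\deg(d_\lambda)$ is bounded and $d^* \in H^\flat$, so $a^* \mid b^*$ in $H^\flat$. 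Combined with the standard characterization of Krull monoids as those admitting a divisor homomorphism into a free abelian monoid, this yields the first assertion.

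For the divisor theory statement, I need every $p^* \in P^*$ to be a gcd of finitely many $\phi(a^*)$'s with $a^* \in H^\flat$. By Remark~\ref{remark:approx}(2), for each $\lambda$ there exist $a_\lambda, b_\lambda \in H_\lambda$ with $p_\lambda = \gcd(\phi_\lambda(a_\lambda), \phi_\lambda(b_\lambda))$, and since minima of exponents in $\mathcal{F}(P^*)$ are computed pointwise under the identification above, such a gcd identity transfers level-wise and gives $p^* = \gcd(\phi(a^*), \phi(b^*))$ once the relevant ultraproducts lie in $H^\flat$. I anticipate that the main obstacle will be arranging these choices so that $\deg(a_\lambda), \deg(b_\lambda)$ are uniformly bounded on a set in $\mathcal{U}$, since otherwise $a^*, b^*$ will not lie in $H^\flat$; this appears to require a more quantitative use of the approximation property than in the argument sketched in Remark~\ref{remark:approx}(2), possibly exploiting additional structural features of the $H_\lambda$ (such as the existence of primes of small degree in each class). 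Granted this, the divisor class group of $H^\flat$ is computed by passing to quotients of divisor groups modulo principal ones level-wise and applying \L o\'s, yielding the ultraproduct of the $\mathcal{C}_v(H_\lambda)$, and the subset of classes containing prime divisors is identified analogously via the image of $P^*$.
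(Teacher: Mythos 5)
Your route coincides with the paper's: identify $\mathcal{F}(P^*)$ with the protoproduct of the $\mathcal{F}(P_\lambda)$, check that $\phi$ is a divisor homomorphism, invoke the characterization of Krull monoids as those admitting a divisor homomorphism into a free abelian monoid (\cite[Theorem 2.4.8]{GHK}), and then try to upgrade to a divisor theory by applying Remark~\ref{remark:approx}(2) componentwise. For the first assertion your argument is complete and in fact more explicit than the published one, which disposes of the divisor-homomorphism property with ``this is just by definition of $H^\flat$''; your observation that cancellation in $\mathcal{F}(P_\lambda)$ forces $\deg(d_\lambda)=|c_\lambda|$ to be bounded, so that the componentwise complementary factors assemble to an element of $H^\flat$, is exactly the point that needs saying.

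The step you leave open --- arranging that the witnesses $a_\lambda,b_\lambda$ with $p_\lambda=\gcd(\phi_\lambda(a_\lambda),\phi_\lambda(b_\lambda))$ have degree bounded uniformly in $\lambda$, so that $a^*,b^*$ actually lie in $H^\flat$ --- is a genuine gap, and you should know that the paper's proof does not close it either: it writes down the componentwise witnesses and asserts $p^*=\gcd(\phi(a^*),\phi(b^*))$ without verifying membership in $H^\flat$, and the remark following the theorem only bounds the \emph{number} of witnesses, not their degrees. Your worry is substantive, not a technicality. The approximation property controls the valuations of $\phi(a)$ at finitely many prescribed primes but says nothing about $\deg(a)$: for a Dedekind domain $R_\lambda$ with class group $\mathbb{Z}$ whose maximal ideals lie exactly in the classes $1$ and $-\lambda$ (such domains exist by the Claborn--Grams realization theorem), every principal ideal divisible by a prime $\mathfrak{p}_\lambda$ of class $-\lambda$ is a product of at least $\lambda+1$ primes, so $(\mathfrak{p}_\lambda)^*\in P^*$ divides $\phi(a^*)$ for no $a^*\in H^\flat$ and hence cannot be a gcd of images of elements of $H^\flat$. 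So the divisor-theory conclusion requires an additional uniform degree bound on gcd-witnesses (automatic for $\mathcal{B}(G_\lambda)$, where degree at most $3$ suffices, which is all that is used in the application in Section~\ref{section:application}); a similar bounded-representative issue lurks in the final identification of the class group with the full ultraproduct of the $\mathcal{C}_v(H_\lambda)$. If you want a correct statement, add such a hypothesis explicitly rather than hoping the approximation property supplies it.
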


\begin{proof}
It can be easily seen that the protoproduct of the $\mathcal{F}(P_\lambda)$ (with respect to the degree given by the length of the unique factorization into primes) is isomorphic to $\mathcal{F}(P^*)$.

The map $\phi$ is a well-defined divisor homomorphism, even without imposing the approximation property on the components. This is just by definition of $H^\flat$. Therefore, using~\cite[Theorem 2.4.8]{GHK}, $H^\flat$ is a Krull monoid.

When each $H_\lambda$ satisfies the approximation property, $\phi$ is a divisor theory by Remark~\ref{remark:approx}(2). Indeed, given $p^* = (p_\lambda)^* \in P^*$, we choose $a_\lambda,b_\lambda \in H_\lambda$ such that $p_\lambda = \gcd(\phi_\lambda(a_\lambda),\phi_\lambda(b_\lambda))$ for each $ \lambda$. Then $p^* = \gcd(\phi(a^*),\phi(b^*))$.

All other statement follow immediately from $\phi$ being a divisor theory.
\end{proof}

\begin{remark}
We want to note that the approximation property is not strictly necessary for the second half of Theorem~\ref{theorem:Krull}. As is evident from the proof, it suffices that there exists a uniform positive integer $n$ such that for every $\lambda$ and every $p \in P_\lambda$ there exist $a_1,\ldots,a_n \in H_\lambda$ such that $p = \gcd(\phi(a_1),\ldots,\phi(a_n))$. In the case of the approximation property, this $n$ can be chosen equal to $2$.
\end{remark}

\begin{corollary}\label{corollary:rings}
For each $\lambda \in \Lambda$, let $R_\lambda$ be a Krull domain with divisor class group $G_\lambda$ and set $G_{\lambda 0}$ containing prime divisors. Let $\mathcal{U}$ be an ultrafilter on $\Lambda$.

Then the protoproduct $R^\flat$ with respect to $\mathcal{U}$ and to the canonical degrees on the $R_\lambda$ is a Krull monoid with divisor class group $\prod^\mathcal{U} G_\lambda$ and set $\prod^\mathcal{U} G_{\lambda 0}$ containing prime divisors.
\end{corollary}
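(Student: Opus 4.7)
The goal is to reduce this statement to Theorem~\ref{theorem:Krull}. To do so, I pass from rings to their monoids of regular elements: for every $\lambda$, the set $R_\lambda^\bullet$ is a Krull monoid (by definition of a Krull domain), with divisor class group $G_\lambda$ and the same subset $G_{\lambda 0}$ of classes containing prime divisors. In the statement of the corollary, $R^\flat$ should then be read as the protoproduct of the family $(R_\lambda^\bullet)_{\lambda\in\Lambda}$ with respect to $\mathcal{U}$ and the canonical degrees of the $R_\lambda^\bullet$, which is a well-defined commutative cancellative monoid since each canonical degree is a multiplicative degree on its monoid by construction.

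Next I verify the extra hypothesis needed for the second half of Theorem~\ref{theorem:Krull}: every Krull monoid $R_\lambda^\bullet$ satisfies the approximation property. This is precisely the content of Remark~\ref{remark:approx}(1), which records the classical Approximation Theorem for Krull domains, so no additional work is needed for the components.

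With these two observations in place, Theorem~\ref{theorem:Krull} applies verbatim: it yields that $R^\flat$ is a Krull monoid, that a divisor theory is given by the map $\phi$ described in the theorem, that its divisor class group is canonically identified with the ultraproduct $\prod^\mathcal{U} G_\lambda$, and that the subset of classes containing prime divisors is $\prod^\mathcal{U} G_{\lambda 0}$. The whole argument is therefore a matter of translating each ring $R_\lambda$ into its monoid of regular elements and invoking the already established theorem; the only thing requiring a brief comment is the bookkeeping to ensure that the protoproduct of the rings, as defined through the canonical degrees, really agrees with the protoproduct of the associated cancellative monoids in the sense of Example~\ref{example:protoproduct}. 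No genuine obstacle arises.
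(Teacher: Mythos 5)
Your proposal is correct and follows the paper's own argument: the corollary is deduced by applying Theorem~\ref{theorem:Krull} to the Krull monoids $R_\lambda^\bullet$ and invoking Remark~\ref{remark:approx}(1) to supply the approximation property. The extra bookkeeping you mention about identifying the protoproduct of the rings with that of the associated monoids is a reasonable elaboration but does not change the route.
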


\begin{proof}
The statement immediately follows from Theorem~\ref{theorem:Krull} and Remark~\ref{remark:approx}(1).
\end{proof}

\begin{remark}\label{remark:notring}
Note that, in order for $R^\flat$ from Corollary~\ref{corollary:rings} to be a ring, it is necessary and sufficient that there exists a uniform bound for the number of prime divisors of $a_\lambda -b_\lambda$ just depending on the numbers of prime divisors of $a_\lambda$ and $b_\lambda$ that is also independent of $\lambda \in \Lambda$ and the choice of $a_\lambda, b_\lambda \in R_\lambda$.

We want to illustrate that this is not even the case in the supposedly easiest situation where $R_\lambda = \Z$ for every $\lambda \in \Lambda$. However, a main ingredient of this argument are the finite residue fields of $\Z$. It is not clear what happens for component rings $R_\lambda$ with infinite residue fields.

Our argument comes from~\cite{Number-primediv}. We repeat it hear for the sake of completeness and show that the difference of two prime powers can have arbitrarily many prime divisors. Let $p,q,p_1,\ldots,p_k$ distinct prime numbers and set $N_k = p_1 \cdots p_k$. Choose $x,y$ such that $p^x \equiv q^y \equiv 1$ mod $N_k$. Then $N_k$ divides $p^x - q^y$ and hence this number has at least $k$ prime divisors.
\end{remark}

Despite Remark~\ref{remark:notring}, it might be interesting for future research to study protoproducts with respect to factorization. As we saw in the natural examples of Remark~\ref{remark:examplesproto}, the construction possibly yields rings again and good knowledge of its arithmetic can be extremely helpful to study classical objects.

\section*{Acknowledgements} 
The proof of Theorem~\ref{theorem:application} is the product of lively discussions with my friend and colleague Victor Fadinger-Held. Thankful for the years of study and research we shared, I would like to dedicate this paper to him.

\bibliographystyle{amsplainurl}
\bibliography{bibliography}
 
\vspace{1cm} 
 
\noindent
\textsc{Daniel Windisch, Department of Analysis and Number Theory (5010), Technische Universität Graz, Kopernikusgasse 24, 8010 Graz, Austria} \\
\textit{E-mail address}: \texttt{dwindisch@math.tugraz.at}

\end{document}